\newcommand{\sa}{s_\varphi}
\newcommand{\ca}{c_\varphi}
\begin{document}

\title{Control of a Bicycle Using Virtual Holonomic Constraints}
\author{Luca Consolini, Manfredi Maggiore,%
\thanks{L. Consolini is with the Department of
  Information Engineering, University of Parma, Viale Usberti
  181/A, Parma, 43124 Italy. E-mail: {\tt lucac@ce.unipr.it}. M. Maggiore is with the Department of Electrical and
  Computer Engineering, University of Toronto, 10 King's College Road,
  Toronto, Ontario, M5S~3G4,
  Canada. E-mail: {\tt maggiore@control.utoronto.ca}.}}

\maketitle

\begin{abstract}
The paper studies the problem of making Getz's bicycle model traverse
a strictly convex Jordan curve with bounded roll angle and bounded
speed. The approach to solving this problem is based on the virtual
holonomic constraint (VHC) method. Specifically, a VHC is enforced
making the roll angle of the bicycle become a function of the
bicycle's position along the curve. It is shown that the VHC can be
automatically generated as a periodic solution of a scalar periodic
differential equation, which we call virtual constraint generator.
Finally, it is shown that if the curve is sufficiently long as
compared to the height of the bicycle's centre of mass and its wheel
base, then the enforcement of a suitable VHC makes the bicycle
traverse the curve with a steady-state speed profile which is periodic
and independent of initial conditions. An outcome of this work is a
proof that the constrained dynamics of a Lagrangian control system
subject to a VHC are generally not Lagrangian.
\end{abstract}

\section{Introduction}
This paper investigates the problem of maneuvering a bicycle along a
closed Jordan curve $\C$ in the horizontal plane in such a way that
the bicycle does not fall over and its velocity is bounded.  The
simplified bicycle model we use in this paper, developed by Neil
Getz~\cite{Get94,GetMar95}, views the bicycle as a point mass with a
side slip velocity constraint, and models its roll dynamics as those
of an inverted pendulum, see Figure~\ref{fig:bicycle}. The model
neglects, among other things, the steering kinematics and the wheels
dynamics with the associated gyroscopic effect. 

In~\cite{HauSacFre04}, Hauser-Saccon-Frezza investigate the
maneuvering problem for Getz's bicycle using a dynamic inversion
approach to determine bounded roll trajectories.  They constrain the
bicycle on the curve and, given a desired velocity signal $v(t)$, they
find a trajectory with the property that the velocity of the bicycle
is $v(t)$ and its roll angle $\varphi$ is in the interval
$(-\pi/2,\pi/2)$, i.e, the bicycle doesn't fall
over. In~\cite{HauSac06}, Hauser-Saccon develop an algorithm to
compute the minimum-time speed profile for a point-mass motorcycle
compatible with the constraint that the lateral and longitudinal
accelerations do not make the tires slip, and apply their algorithm to
Getz's bicycle model.

The problem of maneuvering Getz's bicycle along a closed curve is
equivalent to moving the pivot point of an inverted pendulum around
the curve without making the pendulum fall over. On the other hand,
the seemingly different problem of maneuvering Hauser's PVTOL
aircraft~\cite{HauSasMey92} along a closed curve in the vertical plane
can be viewed as the problem of moving the pivot of an inverted
pendulum around the curve without making the pendulum fall over. The
two problems are, therefore, closely related, the main difference
being the fact that in the former case the pendulum lies on a plane
which is orthogonal to the plane of the curve, while in the latter
case it lies on the same plane. In~\cite{ConMagNieTos10}, the path
following problem for the PVTOL was solved by enforcing a virtual
holonomic constraint (VHC) which specifies the roll angle of the PVTOL
as a function of its position on the curve.  In this paper we follow a
similar approach for the bicycle model, and impose a VHC relating the
bicycle's roll angle to its position along the curve. However, rather
than finding one VHC, as we did in~\cite{ConMagNieTos10}, we show how
to automatically generate VHCs as periodic solutions of a scalar
periodic differential equation which we call the {\em virtual
  constraint generator}. We show that if the path is sufficiently long
compared to the height of the bicycle's centre of mass and the wheel
base, then the VHC can be chosen so that on the constraint manifold
the bicycle traverses the entire curve with bounded speed, and its
speed profile is periodic in steady-state.  Finally, we design a
controller that enforces the VHC, and recovers the asymptotic
properties of the bicycle on the constraint manifold.

\begin{figure}[bht]
\psfrag{a}{$\varphi$}
\psfrag{b}{$\alpha$}
\psfrag{p}{$\psi$}
\psfrag{l}{$b$}
\psfrag{k}{$p$}
\psfrag{h}{$h$}
\psfrag{m}{$m$}
\psfrag{c}{$(x,y)$}

\centerline{\includegraphics[width=.5\textwidth]{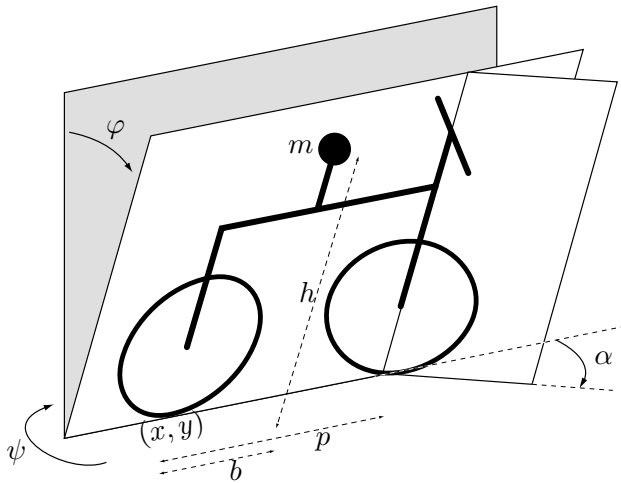}}
\caption{Getz's bicycle model.}
\label{fig:bicycle}
\end{figure}

The concept of VHC is a promising paradigm for motion control. It is one
of the central ideas in the work of Grizzle and collaborators on biped
locomotion (e.g.,~\cite{PleGriWesAbb03} and~\cite{WesGriKod03}), where
VHCs are used to encode different walking gaits.  The work of Shiriaev
and collaborators in~\cite{Can04,ShiPerWit05,FreRobShiJoh08}
investigates VHCs for Lagrangian control systems, i.e., systems of the
form~\cite{Blo03}
\begin{equation}\label{eq:lagrangian}
\frac{d}{dt} \frac{\partial L}{\partial \dot q} - \frac{\partial
  L}{\partial q} = G u,
\end{equation}
with control input $u$ and smooth Lagrangian $L(q,\dot q) = (1/2) \dot
q\trans D(q) \dot q - V(q)$, with $D = D\trans$ positive definite.
In~\cite{ShiPerWit05}, the authors consider systems of the
form~\eqref{eq:lagrangian} with degree of underactuation one. They
find an integral of motion for the constrained dynamics, and use it to
select a desired closed orbit on the constraint manifold. This orbit
is then stabilized by linearizing the control system along it, and
designing a time-varying controller for the
linearization. In~\cite{ShiFreGus10}, these ideas are extended to
systems with degree of underactuation greater than one, and
in~\cite{ShiFreRobJohSan07} they are applied to the stabilization of
oscillations in the Furuta pendulum.  In~\cite{MagCon13}, we
investigated VHCs for Lagrangian control systems with degree of
underactuation one. We introduced and characterized a notion of
regularity of VHCs, and we presented sufficient conditions under which
the reduced dynamics on the constraint manifold (described by a
second-order unforced system) are Lagrangian (i.e., they satisfy the
Euler-Lagrange equations, which have the form~\eqref{eq:lagrangian}
with zero right-hand side). An outcome of this paper
(Proposition~\ref{prop:internal_dynamics_general_properties}) is a
simple sufficient condition under which the reduced dynamics are {\em
  not} Lagrangian. We refer the reader to
Remarks~\ref{rem:shiriaev},~\ref{rem:Lagrange}, and~\ref{rem:not_EL}
for more details.

This paper is organized as follows. In Section~\ref{sec:problem} we
present Getz's bicycle model and we formulate the maneuvering problem
investigated in this paper. In Section~\ref{sec:vcg} we present the
virtual constraint generator idea. The main result is
Proposition~\ref{prop:generator:periodic_solutions} which gives a
constructive methodology to find VHCs for Getz's bicycle that meet the
requirements of the maneuvering problem. In
Section~\ref{sec:constrained_motion} we analyze the motion of Getz's
bicycle on the virtual constraint manifold. In
Proposition~\ref{prop:internal_dynamics_general_properties} we provide
a general result with sufficient conditions under which an unforced
second-order system of a certain form possesses an asymptotically
stable closed orbit. In
Proposition~\ref{prop:internal_dynamics_bicycle} we apply this
result to Getz's bicycle model. In Section~\ref{sec:solution} we bring
these results together and solve the maneuvering
problem. Finally, in Section~\ref{sec:numerics} we make remarks on
numerical implementation of the proposed controller.

{\bf Notation.} Throughout this paper we use the following
notation. If $x$ is a real number and $T>0$, the number $x$ modulo $T$
is denoted by $[x]_T$. We let $[\Re]_T : = \{[x]_T: x \in \Re\}$. The
set $[\Re]_T$ is diffeomorphic to the unit circle. We let $\pi: \Re
\to [\Re]_T$ be defined as $\pi(t) = [t]_T$. Then, $\pi$ is a smooth
covering map (see~\cite[p.91]{Lee13}). If $M$ is a smooth manifold,
and $h: [\Re]_T \to M$ is a smooth function, we define $\tilde h:=h
\circ \pi : \Re \to M$. This is a $T$-periodic function. Moreover,
by~\cite[Theorem~4.29]{Lee13}, $\tilde h$ is smooth if and only if
$h$ is smooth. Finally, $\image(h)$ denotes the image of a function
$h$.

\section{Problem formulation}\label{sec:problem}

Consider the bicycle model depicted in Figure~\ref{fig:bicycle}, with
the following variable conventions (taken from~\cite{HauSacFre04}):
\begin{itemize}

\item $(x,y)$ - coordinates of the point of contact of the rear wheel

\item $\varphi$ - roll angle (a positive $\dot \varphi$ implies that
  the bicycle leans to the right)

\item $\psi$ - yaw angle (a positive $\dot \psi$ means that the
  bicycle turns right)

\item $\alpha$ - projected steering angle on the $(x,y)$ plane

\item $b$ -  distance between the projection of the centre of mass and
  the point of contact of the rear wheel

\item $p$ -  wheel base

\item $h$ - pendulum length

\item $v$ - forward linear velocity of the bicycle

\item $f$ - thrust force.
\end{itemize}
We denote $\bar \kappa = (\tan \alpha) / p = \dot \psi / v$. For a
given velocity signal $v(t)$ and steering angle signal $\alpha(t)$,
$\bar \kappa(t)$ represents the curvature of the path $(x(t),y(t))$
traced by the point of contact of the rear wheel.  In~\cite{GetMar95},
the bicycle of Figure~\ref{fig:bicycle} was modelled by writing the
Lagrangian of the unconstrained bicycle, incorporating the
nonholonomic constraints that the wheels roll without slipping in the
Lagrangian, and then extracting the model through the
Lagrange-d'Alembert equations as in~\cite[Section 5.2]{Blo03}. The
resulting model, which we'll refer to as {\em Getz's bicycle model},
reads as
\begin{align}
&\dot {\bar \kappa} = \tau \nonumber\\
&M \begin{bmatrix} \ddot \varphi \\ \dot v 
\end{bmatrix} = 
F + B  \begin{bmatrix}
  \tau \\ f
\end{bmatrix}, \label{eq:original_system}
\end{align}
where $\tau$, the time derivative of the {curvature $\bar
  \kappa(t)$}, and $f$ are the control inputs and, denoting $\sa =
\sin \varphi$, $\ca=\cos \varphi$,
\[
\begin{aligned}
& M = \begin{bmatrix} h^2 & b h \ca \bar \kappa \\
b h \ca \bar \kappa & 1 + (b^2+h^2 \sa^2) \bar \kappa^2 - 2 h {\bar \kappa} \sa
\end{bmatrix}, \\
& F= \begin{bmatrix} g h \sa - (1 - h
   {\bar \kappa} \sa) h \ca {\bar \kappa} v^2 \\ (1 - h {\bar \kappa} \sa) 2 h \ca {\bar \kappa}
   v \dot \varphi + b h {\bar \kappa} \sa \dot \varphi^2
 \end{bmatrix}, \\
& B =\begin{bmatrix} -b h \ca v & 0 \\ -(b^2 {\bar \kappa} -
  h \sa(1-h {\bar \kappa} \sa)) v & 1/m
  \end{bmatrix}.
\end{aligned}
\]
In this model, $M$ is the inertia matrix, $F$ represents the sum of
Coriolis, centrifugal and conservative forces, and $B$ is the input
matrix. Now consider a $C^3$ closed Jordan curve $\C$ in the $(x,y)$
plane with regular parametrization $\sigma: [\Re]_T \to \Re^2$, not
necessarily unit speed. Let $\kappa:[\Re]_T \to \Re$ denote the signed
curvature of $\C$. Throughout this paper, we assume the following.
\begin{assumption}
\label{assump_convex}
The curve $\C$ is strictly convex, i.e., $\kappa(s) > 0$ for all $s
\in [\Re]_T$.
\end{assumption}
In this paper we investigate the dynamics of the bicycle when the
point $(x,y)$ is made to move along the curve $\C$ by an appropriate
choice of the steering angle. In order to derive the constrained
dynamics, suppose that $( x(0), y(0) ) \in \C$, i.e., $(x(0),y(0)) =
\sigma(s_0)$, for some $s_0 \in [\Re]_T$.  A point $\sigma(s(t))$
moving on $\C$ has linear velocity
\begin{equation}
\label{eqn_for_v}
v(t) = \|\sigma'(s(t))\| \dot s(t) 
\end{equation}
and acceleration
\begin{equation}
\label{eqn_for_v_dot}
\dot v(t) = \|\sigma'(s(t))\| \ddot s(t) + \frac{\dot s^2(t)}{
 \|\sigma'(s(t))\|} \sigma'(s(t))\trans \sigma''(s(t)).
\end{equation}

For an arbitrary input signal $f(t)$, 
$(x(t), y(t))$ traverses $\C$
with velocity $v(t)$ if and only if $(x(0),y(0)) \in \C$, $(\dot
x(0),\dot y(0))$ is tangent to $\C$, $\bar \kappa(0) = \kappa(s_0)$,
and the input signal $\tau(t)$ is
chosen to be $\tau(t) = \kappa'(s(t)) \dot s(t)$, where $\kappa'(s(t)) =
\frac{d\kappa} {ds} (s(t))$. With this choice, we obtain 
\begin{equation}
\label{eqn_for_bar_kappa}
{\bar \kappa}(t) = \kappa(s(t)),
\end{equation}
where $s(t)$ and $\dot s(t)$ are solutions of a differential equation
to be specified later.  The motion of the bicycle on the curve $\C$ is
now found by
substituting~(\ref{eqn_for_v}),~(\ref{eqn_for_v_dot}),~(\ref{eqn_for_bar_kappa})
and $\tau = \kappa'(s) \dot s$ in~(\ref{eq:original_system}):
\begin{equation}\label{eq:constrained_system}
\bar{M} \begin{bmatrix} \ddot \varphi \\ \|\sigma^\prime\| \ddot {s} +
\frac{(\sigma')\trans \sigma''}{\|\sigma'\|} \dot{s}^2 
\end{bmatrix} = 
\bar F + \begin{bmatrix} 0 \\1/m \end{bmatrix} f\;,
\end{equation}
where $\bar M=M|_{\bar \kappa =\kappa(s)}$ and
\[
\bar F = \left(F+B \begin{bmatrix} 1 \\ 0
\end{bmatrix}\kappa'(s) \dot s
\right)\bigg|_{\bar \kappa =\kappa(s), v=\|\sigma'(s)\|\dot s}.
\]
Note that we have used the control input $\tau$ to make the curve $\C$
invariant, so in~\eqref{eq:constrained_system} we are left with one
control input, the thrust force $f$. One can check that
system~\eqref{eq:constrained_system} is a Lagrangian control system
with input $f$, i.e., it has the form~\eqref{eq:lagrangian} with $q=
(\varphi,s)\in \cQ =S^1 \times [\Re]_T$, $G = [0 \ \ 1/m]\trans$, and
\[
L(q,\dot q) = \frac 1 2  [\dot
  \varphi \ \ \|\sigma^\prime(s)\| \dot{s}] \bar{M}(q) \begin{bmatrix}
  \dot \varphi \\ \|\sigma^\prime(s)\| \dot{s}
\end{bmatrix}- g h \cos \varphi.
\]
Since the control force $f$ enters nonsingularly in the $\ddot s$
equation, we can define a feedback transformation for $f$
in~\eqref{eq:constrained_system} such that $\ddot s=u$, where $u$ is
the new control input. With this transformation, the motion of the
bicycle when its rear wheel is made by feedback control to follow $\C$
reads as
\begin{equation}\label{eq:sys}
\begin{aligned}
& \ddot \varphi = h^{-1}g \sa - h^{-1}\Big[ (1 - h \kappa(s)
    \sa)\kappa(s) \|\sigma'(s)\| + b \kappa'(s) \\
&  +\frac{b \kappa(s)}{\|\sigma'(s)\|^2} {\sigma'(s)}\trans
   \sigma''(s)\Big]\ca \|\sigma'(s)\| \dot s^2 - a(s) \ca u \\
& \ddot s = u,
\end{aligned}
\end{equation}
where $a(s) = b h^{-1} \kappa(s) \|\sigma'(s)\|$. In the above
equation, $u$ is the new control input, and it represents the
acceleration of the curve parameter $s$. We will denote $\cX := \{
(q,\dot q) \in S^1 \times [\Re]_T \times \Re^2\}$ the state space
of~\eqref{eq:sys}.

\begin{remark}
{\rm If $\sigma(s)$ is a unit speed parametrization of $\C$ (i.e. it
  satisfies $\|\sigma'(s)\|=1$), then the first differential equation
  in~\eqref{eq:sys} reduces to
$h \ddot \varphi = g \sa - \Big[ (1 - h \kappa(s) \sa)\kappa(s) + b
 \kappa'(s)\Big]\ca \dot s^2 - b \kappa(s) \ca u$.
}
\end{remark}
\begin{problem}{Maneuvering Problem}
Find a feedback $u(q,\dot q)$ for system~\eqref{eq:sys} such that
there exists a set of initial conditions $\Omega$ with the property
that, for all $( q(0),\dot q(0) ) \in \Omega$, the bicycle does not
overturn, i.e., $|\varphi(t)| < \pi/2$ for all $t \geq 0$, and
traverses the entire curve $\C$ in one direction, i.e., there exists
$\bar t>0$ such that $| \dot s(t) | >0$ for all $ t \geq \bar t$. Moreover, the
speed $\dot s(t)$ of the bicycle on $\C$ should remain bounded.
\end{problem}
Our solution of this problem relies on the notion of VHC.
\begin{definition}[\cite{MagCon13}]
A {\bf virtual holonomic constraint (VHC)} for system~\eqref{eq:sys}
is a relation $\varphi = \Phi(s)$, where $\Phi: [\Re]_T \to S^1$ is
smooth and the set
$\Gamma = \{(q,\dot q)\in \cX: \varphi = \Phi(s), \ \dot \varphi = \Phi'(s)
\dot s\}$
is controlled invariant. That is, there exist a smooth feedback
$u(q,\dot q)$ such that $\Gamma$ is invariant for the closed-loop
system. The set $\Gamma$ is called the {\bf constraint manifold}
associated with the VHC $\varphi = \Phi(s)$.
\end{definition}
The definition above formalizes the notion of VHC used
in~\cite{WesGriKod03} in the context of biped locomotion.  The
constraint manifold $\Gamma$ is a two-dimensional submanifold of $\cX$
parametrized by $(s,\dot s)$, and therefore diffeomorphic to the
cylinder $[\Re]_T \times \Re$.  It is the collection of all those
phase curves of~\eqref{eq:sys} that can be made to satisfy the
constraint $\varphi = \Phi(s)$ via feedback control.  In order to
solve the maneuvering problem, we look for VHCs $\varphi = \Phi(s)$
such that $| \Phi (s) | < \pi/2$ for all $s \in [\Re]_T$, and then
stabilize the associated virtual constraint
manifold. In~\cite{HauSacFre04}, Hauser-Saccon-Frezza find ``bounded
roll trajectories,'' i.e., controlled trajectories of~\eqref{eq:sys}
along which the roll angle $\varphi$ is bounded in the interval
$(-\pi/2,\pi/2)$.  In our context, each VHC $\varphi = \Phi(s)$
provides a {\em family} of bounded roll trajectories. Once $\Gamma$
has been made invariant via feedback control, bounded roll
trajectories can be obtained by picking arbitrary $(s_0,\dot s_0) \in
[\Re]_T \times \Re$, and picking as initial condition
in~\eqref{eq:sys}, $(\varphi(0),s(0)) = (\Phi(s_0),s_0)$, $(\dot
\varphi(0),\dot s(0)) = (\Phi'(s_0)\dot s_0,\dot s_0)$. The resulting
solution $(q(t),\dot q(t))$ will satisfy $\varphi(t) = \Phi(s(t))$,
implying that the roll angle trajectory $\varphi(t)$ is bounded in the
interval $(-\pi/2,\pi/2)$.
\section{The virtual constraint generator}\label{sec:vcg}
In this section we show that VHCs for~\eqref{eq:sys} can be generated
as solutions of a first-order differential equation, which we call the
{\em VHC generator.} This idea was first presented in our previous
work~\cite{ConMag10_1}.  We begin with a sufficient condition for a
relation $\Phi$ to be a VHC for~\eqref{eq:sys}.
\begin{lemma}\label{lem:feasibility}
A relation $\varphi = \Phi(s)$, where $\Phi:[\Re]_T \to S^1$ is
smooth, is a VHC for system~\eqref{eq:sys} if
\begin{equation}\label{eq:feasibility}
(\forall s \in [\Re]_T) \ \Phi'(s) + a(s) \cos \Phi(s) \neq 0.
\end{equation}
\end{lemma}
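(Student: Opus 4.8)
The plan is to exhibit an explicit smooth feedback $u(q,\dot q)$ rendering $\Gamma$ invariant, and to show that condition~\eqref{eq:feasibility} is exactly what is needed to define it. First I would compute the acceleration $\ddot\varphi$ along any trajectory confined to $\Gamma$. Since on $\Gamma$ one has $\varphi=\Phi(s)$, differentiating twice and using $\ddot s = u$ gives $\ddot\varphi = \Phi''(s)\dot s^2 + \Phi'(s)\,u$. On the other hand, substituting $\varphi=\Phi(s)$ into the $\ddot\varphi$-equation of~\eqref{eq:sys} yields a second expression for $\ddot\varphi$ of the form (smooth terms in $s,\dot s$) $-\,a(s)\cos\Phi(s)\,u$. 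Equating the two expressions and collecting the terms containing $u$ produces the scalar linear equation
\[
\bigl(\Phi'(s) + a(s)\cos\Phi(s)\bigr)\,u = N(s,\dot s),
\]
where $N(s,\dot s)$ gathers all remaining smooth terms (those coming from the gravitational and velocity-dependent forces in~\eqref{eq:sys} together with $-\Phi''(s)\dot s^2$).

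The coefficient of $u$ in this equation is precisely $\Phi'(s)+a(s)\cos\Phi(s)$, so condition~\eqref{eq:feasibility} is exactly the requirement that this coefficient never vanishes. Under~\eqref{eq:feasibility} I would therefore define the feedback
\[
u(s,\dot s) = \frac{N(s,\dot s)}{\Phi'(s) + a(s)\cos\Phi(s)},
\]
which depends only on $(s,\dot s)$ and hence is defined on all of $\cX$. Smoothness follows because the numerator and denominator are smooth and, by continuity of $\Phi'(s)+a(s)\cos\Phi(s)$ on the compact set $[\Re]_T$, the denominator is bounded away from zero.

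It remains to verify that this feedback actually makes $\Gamma$ invariant, which I would do by a uniqueness argument exploiting the triangular structure of the closed loop. Because $u$ depends only on $(s,\dot s)$, the subsystem $\ddot s = u(s,\dot s)$ is self-contained, so $s(t)$ is determined by $(s(0),\dot s(0))$ alone. Given initial data on $\Gamma$, namely $\varphi(0)=\Phi(s(0))$ and $\dot\varphi(0)=\Phi'(s(0))\dot s(0)$, I would set $\varphi_\ast(t):=\Phi(s(t))$ and check that $\varphi_\ast$ satisfies the closed-loop $\ddot\varphi$-equation: this is immediate from the defining relation for $u$, since $\ddot\varphi_\ast = \Phi''(s)\dot s^2 + \Phi'(s)u$ matches the right-hand side of~\eqref{eq:sys} evaluated at $\varphi=\Phi(s)$. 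As $\varphi_\ast$ and the actual solution $\varphi$ solve the same second-order ODE with the same initial conditions, uniqueness of solutions gives $\varphi(t)\equiv\Phi(s(t))$, hence $\dot\varphi(t)\equiv\Phi'(s(t))\dot s(t)$, so the trajectory stays on $\Gamma$.

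I do not anticipate a genuine obstacle: the computation of $\ddot\varphi$ on $\Gamma$ is routine, and the entire content of the lemma is that the $u$-coefficient obtained after equating the two expressions is $\Phi'(s)+a(s)\cos\Phi(s)$. The one point requiring a little care is the passage from ``$u$ can be solved for pointwise'' to ``$\Gamma$ is invariant,'' which is handled by the decoupling-plus-uniqueness argument above rather than by pointwise algebra alone. A minor technical remark is that $\varphi$ and $\Phi(s)$ take values in $S^1$, so the uniqueness step should be read locally in a chart around $\Gamma$, which is harmless.
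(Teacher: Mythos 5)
Your proof is correct, and it arrives at the same key identity as the paper but by a more elementary, self-contained route. The paper's proof is a one-liner: it observes that condition~\eqref{eq:feasibility} is precisely the statement that system~\eqref{eq:sys} with output $H(\varphi,s)=\varphi-\Phi(s)$ has uniform relative degree $2$ on $\{\varphi=\Phi(s)\}$ (since the coefficient of $u$ in $\ddot H$ is $-(\Phi'(s)+a(s)\cos\Phi(s))$), and then invokes the standard fact that the zero dynamics manifold of such an output --- which is exactly $\Gamma$ --- is controlled invariant. You carry out the same computation explicitly and then supply by hand the step the paper delegates to zero-dynamics theory: you construct the feedback $u=N/(\Phi'+a\cos\Phi)$, exploit the fact that it depends only on $(s,\dot s)$ so the $\ddot s=u$ subsystem decouples, and conclude invariance of $\Gamma$ via uniqueness of solutions applied to $\varphi_\ast=\Phi\circ s$. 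What your version buys is a proof readable without input-output linearization background and an explicit justification of the passage from ``$u$ is solvable pointwise'' to ``$\Gamma$ is invariant''; what the paper's version buys is brevity and a framing (relative degree, zero dynamics) that is reused verbatim when the invariance feedback~\eqref{eqn_inv_feedback} and the stabilizing feedback~\eqref{eq:stabilizing_feedback} are derived later. Your closing remark about working in a chart on $S^1$ for the uniqueness step is the right level of care; no gaps.
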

\begin{proof}
Letting $H(\varphi,s):=\varphi - \Phi(s)$,
condition~\eqref{eq:feasibility} is the requirement that
system~\eqref{eq:sys} with output $H(\varphi,s)$ has a well-defined
uniform relative degree 2 on the set $\{(q,\dot q)\in \cX: \varphi =
\Phi(s)\}$. The associated zero dynamics manifold is precisely
$\Gamma$, and it is controlled invariant.
\end{proof}
The foregoing lemma inspires the following observation. Instead of
guessing a relation $\varphi = \Phi(s)$ and checking whether it is a
VHC for~\eqref{eq:sys}, as in Lemma~\ref{lem:feasibility}, we will
assign a nonzero right-hand side to~\eqref{eq:feasibility}, view the
resulting identity as an ODE, and {\em generate} VHCs by solving the
ODE. More precisely, recall the notation $\tilde h:=h \circ \pi$, and
consider the scalar $T$-periodic differential equation
\begin{equation} \label{eq:constraint_generator}
\dot x = - \tilde a(t)\cos x + \tilde \delta(t),
\end{equation}
where $\tilde \delta:\Re \to \Re \backslash \{0\}$ is a $T$-periodic
function to be assigned. Then, $T$-periodic solutions
of~\eqref{eq:constraint_generator} give rise to VHCs.
\begin{lemma}\label{lem:vcg}
Suppose that $x(t)$ is a $T$-periodic solution
of~\eqref{eq:constraint_generator}, where $\tilde \delta :\Re \to \Re
\backslash \{0\}$ is smooth and $T$-periodic. Then, the relation
$\varphi = \Phi(s)$, with $\Phi:=x \circ \pi^{-1}:[\Re]_T \to \Re$, is
a VHC for~\eqref{eq:sys}.
\end{lemma}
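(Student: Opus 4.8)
The plan is to verify that $\Phi$ satisfies the sufficient condition~\eqref{eq:feasibility} of Lemma~\ref{lem:feasibility}, from which the conclusion is immediate. The key observation is that the generator~\eqref{eq:constraint_generator} is built precisely so that its $T$-periodic solutions, once projected to $[\Re]_T$, obey the feasibility identity with the nowhere-vanishing function $\tilde\delta$ on the right-hand side.

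First I would establish that $\Phi$ is a well-defined smooth function. Since the right-hand side of~\eqref{eq:constraint_generator} is smooth in $(t,x)$ --- $\tilde a$ and $\tilde\delta$ are smooth and $T$-periodic and $\cos$ is smooth --- the solution $x(t)$ is smooth. Being $T$-periodic, $x$ factors through the covering map $\pi$: there is a unique $\Phi:[\Re]_T \to \Re$ with $x = \Phi\circ\pi = \tilde\Phi$, and this is exactly the map denoted $x\circ\pi^{-1}$. By~\cite[Theorem~4.29]{Lee13}, smoothness of $\tilde\Phi$ is equivalent to smoothness of $\Phi$, so $\Phi$ is smooth. (Composing with the covering $\Re\to S^1$ lets us regard $\Phi$ as $S^1$-valued as required by the VHC definition; the quantities $\Phi'$ and $\cos\Phi$ are unaffected.)

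Next I would relate the two derivatives. Because $\pi$ is a local diffeomorphism with unit derivative, differentiating $x = \Phi\circ\pi$ gives $\dot x(t) = \Phi'(\pi(t))$. Substituting this together with $\tilde a = a\circ\pi$ into~\eqref{eq:constraint_generator} yields, for all $t$,
\[
\Phi'(\pi(t)) + a(\pi(t))\cos\Phi(\pi(t)) = \tilde\delta(t).
\]
Since $\pi$ is surjective, every $s\in[\Re]_T$ equals $\pi(t)$ for some $t$, so $\Phi'(s) + a(s)\cos\Phi(s) = \tilde\delta(t) \neq 0$, which is precisely~\eqref{eq:feasibility}. Lemma~\ref{lem:feasibility} then gives the result.

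The only point needing care is the descent from $\Re$ to $[\Re]_T$: one must check both that $T$-periodicity permits $x$ to factor through $\pi$ and that the quotient $\Phi$ inherits smoothness, which is where the covering-map structure and~\cite[Theorem~4.29]{Lee13} are used. After that, the verification of~\eqref{eq:feasibility} is a one-line substitution, with the hypothesis $\tilde\delta(\Re)\subset\Re\setminus\{0\}$ supplying exactly the needed strict inequality.
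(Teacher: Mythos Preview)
Your proof is correct and follows essentially the same route as the paper's own argument: smoothness of $x$ from smoothness of the right-hand side, descent to $[\Re]_T$ via $T$-periodicity and \cite[Theorem~4.29]{Lee13}, the chain-rule identity $\dot x(t)=\Phi'(\pi(t))$, and then substitution into~\eqref{eq:constraint_generator} to recover~\eqref{eq:feasibility} with $\delta\neq 0$ on the right-hand side. The only cosmetic difference is that you invoke surjectivity of $\pi$ explicitly, whereas the paper writes the identity directly in terms of $\pi^{-1}(s)$.
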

\begin{proof}
Let $x(t)$ be a $T$-periodic solution
of~\eqref{eq:constraint_generator}. Then, $x(t)$ is smooth because the
right-hand side of~\eqref{eq:constraint_generator} is smooth. Since
$x(t)$ is $T$-periodic, $\Phi:=x \circ \pi^{-1}$ is a well-defined
function $[\Re]_T \to \Re$, and by~\cite[Theorem~4.29]{Lee13} it
is smooth. Since $x = \Phi \circ \pi$, we have $\dot x(t) =
\Phi'(\pi(t)) \dot \pi(t) = \Phi'(\pi(t))$, so that $\Phi'(s) =\dot
x(t)|_{t = \pi^{-1}(s)}$. Using~\eqref{eq:constraint_generator}, we get
\begin{align}
\Phi'(s) & = -\tilde a(\pi^{-1}(s)) \cos \Phi(s) + \tilde
\delta(\pi^{-1}(s)) \nonumber \\
&=- a(s) \cos \Phi(s) +
\delta(s). \label{eq:Phiprime}
\end{align}
Since $\delta(s) \neq 0$ for all $s \in [\Re]_T$, by
Lemma~\ref{lem:feasibility} the relation $\varphi = \Phi(s)$ is a VHC
for system~\eqref{eq:sys}.  \end{proof}
In light of Lemma~\ref{lem:vcg}, we call the differential
equation~\eqref{eq:constraint_generator} a {\em VHC generator}, for
which one is to pick a $T$-periodic $\tilde \delta:\Re \to \Re
\backslash \{0\}$ yielding a $T$-periodic solution. The next
proposition shows how to pick $\tilde \delta$.
\begin{proposition}\label{prop:generator:periodic_solutions}
Consider the smooth and $T$-periodic differential
equation~\eqref{eq:constraint_generator}, and set $\tilde \delta(t) =
\epsilon \tilde \mu(t)$, where $\tilde \mu: \Re \to (0,\infty)$ is
smooth and $T$-periodic. Let $K^+ = \max_{t \in [0, T]}[ \tilde
  \mu(t)/\tilde a(t)]$, $K^- = \min_{t \in [0,T]} [\tilde
  \mu(t)/\tilde a(t)]$.  Then, for any $x_0 \in (0,\pi/2)$ and $t_0
\in \Re$, the following two properties hold:
\begin{enumerate}[(i)]
\item there exists a unique $\epsilon \in [\epsilon^-,\epsilon^+] = [
  ( c_{x_0}) / K^+,$ $( c_{x_0}) / K^- ]$, such that the solution
  $x(t)$ of~\eqref{eq:constraint_generator} with initial condition
  $x(t_0) = x_0$ is $T$-periodic, and setting $\Phi=x \circ \pi^{-1}$,
  the relation $\varphi = \Phi(s)$ is a VHC for~\eqref{eq:sys}.
\item If $\tilde\mu:\Re \to (0,\infty)$ is chosen so that $K^+ / K^- <
  (\cos x_0)^{-1}$, then the VHC $\varphi=\Phi(s)$ in part (i)
  satisfies $\image(\Phi) \subset (\Phi^-,\Phi^+) \subset (0,\pi/2)$,
  where $\Phi^-=\cos^{-1}\big(\frac{K^+}{K^-} c_{x_0}\big)$ and
  $\Phi^+=\cos^{-1}\big(\frac{K^-}{K^+} c_{x_0}\big)$.
\end{enumerate}
\end{proposition}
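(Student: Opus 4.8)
The plan is to treat the value of $\epsilon$ that renders the solution of~\eqref{eq:constraint_generator} $T$-periodic as the root of a scalar return map, to locate that root by a comparison argument against the constant solution $y\equiv x_0$, and in part (ii) to confine the resulting periodic orbit to $(0,\pi/2)$ using one-way barriers together with the first-order conditions at its extrema. Throughout I write $r(t):=\tilde\mu(t)/\tilde a(t)\in[K^-,K^+]$ and recall $\tilde a(t)>0$ (Assumption~\ref{assump_convex} and regularity of $\sigma$).

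For part (i), fix $x_0\in(0,\pi/2)$ and $t_0$, and for each $\epsilon$ let $x(t;\epsilon)$ solve~\eqref{eq:constraint_generator} with $x(t_0;\epsilon)=x_0$. Since the right-hand side is bounded in $x$ (as $|\cos x|\le 1$), solutions are global, so the return map $P(\epsilon):=x(t_0+T;\epsilon)-x_0$ is well defined and, by smooth dependence on parameters, smooth; because the vector field is $T$-periodic in $t$, $x(\cdot;\epsilon)$ is $T$-periodic iff $P(\epsilon)=0$. First I would show $P$ is strictly increasing: the sensitivity $w=\partial x/\partial\epsilon$ solves the variational equation $\dot w=\tilde a(t)\sin(x)\,w+\tilde\mu(t)$ with $w(t_0)=0$, whose variation-of-constants solution is strictly positive for $t>t_0$ because $\tilde\mu>0$; hence $P'(\epsilon)=w(t_0+T;\epsilon)>0$. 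Next I would bracket the root. At $\epsilon=\epsilon^+=c_{x_0}/K^-$ the bound $\tilde\mu(t)\ge K^-\tilde a(t)$ gives $\epsilon^+\tilde\mu(t)\ge c_{x_0}\,\tilde a(t)$, so $\dot x\ge\tilde a(t)(c_{x_0}-\cos x)$; comparing with the constant solution $y\equiv x_0$ of $\dot y=\tilde a(t)(c_{x_0}-\cos y)$ yields $x(t;\epsilon^+)\ge x_0$, i.e. $P(\epsilon^+)\ge0$. Symmetrically, $\tilde\mu(t)\le K^+\tilde a(t)$ gives $P(\epsilon^-)\le0$ at $\epsilon^-=c_{x_0}/K^+$. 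Monotonicity and the intermediate value theorem then produce a unique $\epsilon\in[\epsilon^-,\epsilon^+]$ with $P(\epsilon)=0$; this $\epsilon$ is positive, so $\tilde\delta=\epsilon\tilde\mu$ is nowhere zero and Lemma~\ref{lem:vcg} identifies $\Phi=x\circ\pi^{-1}$ as a VHC.

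For part (ii) the real work is confining the periodic solution to $(0,\pi/2)$; once this is done the bounds fall out of the extremal identities. On $x=\pi/2$ one has $\dot x=\epsilon\tilde\mu(t)>0$, so $x(t)-\pi/2$ can change sign only upward; as $x$ is $T$-periodic with $x(t_0)<\pi/2$, this forces $x(t)<\pi/2$ for all $t$. The lower barrier is where the hypothesis $K^+/K^-<c_{x_0}^{-1}$ enters: on $x=0$ one has $\dot x=\tilde a(t)(\epsilon r(t)-1)$, and since $\epsilon\le\epsilon^+$ and $r(t)\le K^+$ we get $\epsilon r(t)\le(K^+/K^-)c_{x_0}<1$, whence $\dot x<0$ on $x=0$. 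Thus $x$ can cross $0$ only downward, and periodicity with $x(t_0)>0$ rules out any crossing, giving $x(t)>0$ for all $t$.

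With $x(t)\in(0,\pi/2)$ in hand, I would evaluate the ODE at the (periodic) global maximum $t^*$ and minimum $t_*$, where $\dot x=0$ forces $\cos x(t^*)=\epsilon r(t^*)$ and $\cos x(t_*)=\epsilon r(t_*)$. Using $\epsilon\ge\epsilon^-$ and $r(t^*)\ge K^-$ gives $\cos x(t^*)\ge(K^-/K^+)c_{x_0}=\cos\Phi^+$, while $\epsilon\le\epsilon^+$ and $r(t_*)\le K^+$ gives $\cos x(t_*)\le(K^+/K^-)c_{x_0}=\cos\Phi^-$; since cosine is strictly decreasing on $(0,\pi/2)$ these read $\max x\le\Phi^+$ and $\min x\ge\Phi^-$, so $\image(\Phi)\subset[\Phi^-,\Phi^+]\subset(0,\pi/2)$. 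Strictness when $K^+>K^-$ follows by noting that $\max x=\Phi^+$ would require simultaneously $\epsilon=\epsilon^-$ and $r(t^*)=K^-$, yet $\epsilon=\epsilon^-$ already forces $x(t)\le x_0<\Phi^+$ through the part-(i) comparison, a contradiction; the minimum is symmetric. I expect the genuine obstacle to be this confinement step, since the naive trapping rectangle $[\Phi^-,\Phi^+]$ is \emph{not} invariant (the field may point outward on $x=\Phi^+$): the argument must instead couple the one-way barriers at $0$ and $\pi/2$ with the extremal identities, and it is precisely the ratio hypothesis that converts $x=0$ into a one-way barrier.
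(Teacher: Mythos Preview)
Your argument is correct, and part (i) is essentially the paper's proof: both construct the return map through $x_0$, bracket its root by comparison against the constant solution $y\equiv x_0$, and invoke continuity; your use of the variational equation for strict monotonicity is a clean alternative to the paper's second comparison argument.

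In part (ii) the routes diverge. You first confine the periodic solution to $(0,\pi/2)$ via one-way barriers at $x=0$ and $x=\pi/2$, and only then extract the sharp bounds $[\Phi^-,\Phi^+]$ from the first-order conditions at the extrema. The paper instead applies the one-way barrier idea directly at $x=\Phi^\pm$: it checks that $\dot x|_{x=\Phi^+}\ge 0$ (using $\epsilon\ge\epsilon^-$) and $\dot x|_{x=\Phi^-}\le 0$ (using $\epsilon\le\epsilon^+$), so that the sets $\{x>\Phi^+\}$ and $\{x<\Phi^-\}$ are each positively invariant; since the periodic solution passes through $x_0\in(\Phi^-,\Phi^+)$, it can never enter either set, and the bound follows in one step. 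Your closing remark therefore misdiagnoses the obstacle: it is true that $[\Phi^-,\Phi^+]$ is not forward-invariant in the usual sense (the field does point outward on $x=\Phi^+$), but that very outward-pointing is exactly what makes the \emph{complement} components positively invariant, and periodicity does the rest. Your detour through $(0,\pi/2)$ is not needed, though it is harmless, and your extremal argument has the side benefit of clarifying when the inclusion is strict --- something the paper's proof actually leaves at the closed interval $[\Phi^-,\Phi^+]$.
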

\begin{remark}\label{rem:mu}
{\rm By choosing $\tilde \mu(t) \equiv \tilde a(t)$, we have $K^+ = K^- =1$,
  $\epsilon^+ = \epsilon^- = c_{x_0}$.
  Proposition~\ref{prop:generator:periodic_solutions} implies that,
  for all $x_0 \in (0,\pi/2)$, setting $\tilde \delta(t) = \cos (x_0)
  \tilde a(t)$, the VHC generator produces a $T$-periodic solution $x(t)$
  with image contained in $(0,\pi/2)$. As a matter of fact, the
  solution in question is $x(t) \equiv x_0$, so that the relation
  $\varphi =x_0$ is a VHC for~\eqref{eq:sys}.  The bicycle subject to
  this VHC has a constant roll angle as it travels around $\C$.  The
  proposition provides great flexibility in finding VHCs with the
  property that the roll angle is confined within the interval
  $(0,\pi/2)$. All such constraints are compatible with the
  maneuvering problem.}
\end{remark}
\begin{proof}
Pick an arbitrary $x_0 \in (0,\pi/2)$ and $t_0 \in \Re$, and
consider the VHC generator
\begin{equation}\label{eq:constraint_generator:proof}
\dot x = - \tilde a(t) \cos x + \epsilon \tilde \mu(t),
\end{equation}
where $\tilde \mu(\cdot)>0$. Let $\cS(\epsilon)$ denote the solution
of~\eqref{eq:constraint_generator:proof} at time $t_0+T$ with initial
condition $x(t_0) = x_0$. Since the right-hand side
of~\eqref{eq:constraint_generator:proof} is $T$-periodic, the solution
of~\eqref{eq:constraint_generator:proof} with initial condition
$x(t_0) =x_0$ is $T$-periodic if and only if $\cS(\epsilon) = x_0$.
For all $\epsilon > \epsilon^+$ we have
\[
\begin{aligned}
\dot x & > -\tilde a(t) \cos x + \tilde \mu(t) \cos(x_0) / K^-\\
 & > -\tilde a(t) \left[ \cos x - \cos x_0 (\tilde \mu(t) / \tilde
  a(t) ) / K^- \right]] \\
& > -\tilde a(t) (\cos x - \cos x_0).
\end{aligned}
\]
The solution of $\dot x = -\tilde a(t) (\cos x - \cos x_0)$ with
initial condition $x(t_0) = x_0$ is $x(t) \equiv x_0$. Therefore, by
the comparison lemma (\cite[Lemma 3.4]{Kha02}), for all $\epsilon>
\epsilon^+$ the solution of~\eqref{eq:constraint_generator:proof} with
initial condition $x(t_0) = x_0$ satisfies $x(t)> x_0$ for all $t >
t_0$, so that $\cS(\epsilon)> x_0$ for all $\epsilon> \epsilon^+$.  A
similar argument can be used to show that for all $\epsilon<
\epsilon^-$ the solution of~\eqref{eq:constraint_generator:proof}
satisfies $x(t) < x_0$ for all $t>t_0$, so that $\cS(\epsilon)< x_0$
for all $\epsilon<\epsilon^-$. By continuity of solutions with respect
to parameters, the map $\cS$ is continuous, and therefore there exists
$\epsilon \in [\epsilon^-,\epsilon^+]$ such that $\cS(\epsilon) =
x_0$. The corresponding solution $x(t)$ is $T$-periodic.  The same
comparison argument shows that if $0<\epsilon_1 < \epsilon_2$, and for
$i=1,2$, $x_i(t)$ is the solution
of~\eqref{eq:constraint_generator:proof} with $\epsilon = \epsilon_i$,
then $x_1(t_0+T) < x_2(t_0+T)$, so that $\cS(\epsilon_1) <
\cS(\epsilon_2)$. In other words, $\cS(\epsilon)$ is a monotonically
increasing function, and so the value of $\epsilon$ above is
unique. This concludes the proof of part (i).  Now suppose that $K^+ /
K^- < (\cos x_0)^{-1}$, and let $\epsilon \in [\epsilon^-,\epsilon^+]$
be such that the solution $x(t)$
of~\eqref{eq:constraint_generator:proof} with initial condition
$x(t_0)=x_0$ is $T$-periodic, as in part (i). Let $x^+ =
\cos^{-1}\big(\frac{K^-}{K^+} c_{x_0}\big)$, and consider the subset
of the extended phase space $S^+= \{(x,t): x > x^+\}$. This subset is
positively invariant since $\tilde a>0$ and
\[
\begin{aligned}
\dot x\big|_{x=x^+} &= -\tilde a(t) \frac{K^-}{K^+} \cos x_0 +
\epsilon \tilde \mu(t) \\
&\hspace*{-5ex}\geq -\tilde a(t)
\frac{K^-}{K^+} \cos x_0 + (\cos x_0 / K^+) \tilde \mu(t)\\
&\hspace*{-5ex}\geq \frac{\tilde a(t)}{K^+} \cos x_0 \left( -K^- +
\tilde \mu(t)/\tilde a(t)\right)\\
&\hspace*{-5ex}\geq 0.
\end{aligned}
\]
Similarly, letting $x^- = \cos^{-1}\big(\frac{K^+}{K^-} c_{x_0}\big)$,
the subset $S^-= \{(x,t): x < x^-\}$ is positively invariant. Since
$\{ (x,t): x= x_0\}$ has empty intersection with $S^+ \cup S^-$, and
since $S^+ \cup S^-$ is positively invariant, it follows that the
$T$-periodic solution $x(t)$ with initial condition $x(t_0)=x_0$ must
be contained in the complement of $S^+ \cup S^-$, i.e., for all
$t\in\Re$, $x^- \leq x(t) \leq x^+$. For, if for some $\bar t \in
\Re$, $x(\bar t) \in S^+ \cup S^-$, then the fact that $x(t) \in S^+
\cup S^-$ for all $t\geq \bar t$ would contradict the periodicity of
$x(t)$.  
\end{proof}
\begin{example}\label{ex:ellipse}
{\rm Suppose $\C$ is an ellipse with major semiaxis $A$, minor
  semiaxis $B$, and $2 \pi$-periodic parametrization $\sigma(s) = (A
  \cos s, B \sin s)$, with $A=15$, $B=10$. The curvature is
  {$\kappa(s) = AB / ( A^2 \sin^2 s + B^2 \cos^2 s
    )^{3/2}$}. For the initial condition of the VHC
  generator~\eqref{eq:constraint_generator}, we pick $x(0) = \pi/8$.
  Following Proposition~\ref{prop:generator:periodic_solutions}, we
  need to choose a $2 \pi$-periodic function $\tilde \mu(t)>0$, set
  $\tilde \delta(t) = \epsilon \tilde \mu(t)$, and find the unique
  value of $\epsilon>0$ guaranteeing that the solution with initial
  condition $x(0) = \pi/8$ is $2\pi$-periodic.  There is much freedom
  in the choice of $\tilde \mu(t)$. For instance, picking $\tilde
  \mu(t) = 1$, we numerically find {$\epsilon \approx
    0.6482$}. The corresponding VHC is depicted in
  Figure~\ref{fig:ellipse:virtual_constraint}. The condition, in
  Proposition~\ref{prop:generator:periodic_solutions}(ii), that $K^+/
  K^- < (\cos x_0)^{-1}$ is conservative. Indeed, with our choice of
  $\tilde \mu$ we have $K^+ =3/2$, $K^-=2/3$, and thus the condition
  is violated. Yet, we see in
  Figure~\ref{fig:ellipse:virtual_constraint} that $ \Phi(s) \in (0,
  \pi/2)$ for all $s \in [\Re]_T$.
\begin{figure}
\psfrag{s}{$s$}
\psfrag{v}{$\varphi$}
\psfrag{P}{\small $\varphi=\Phi(s)$}
\centerline{\includegraphics[width=.495\textwidth]{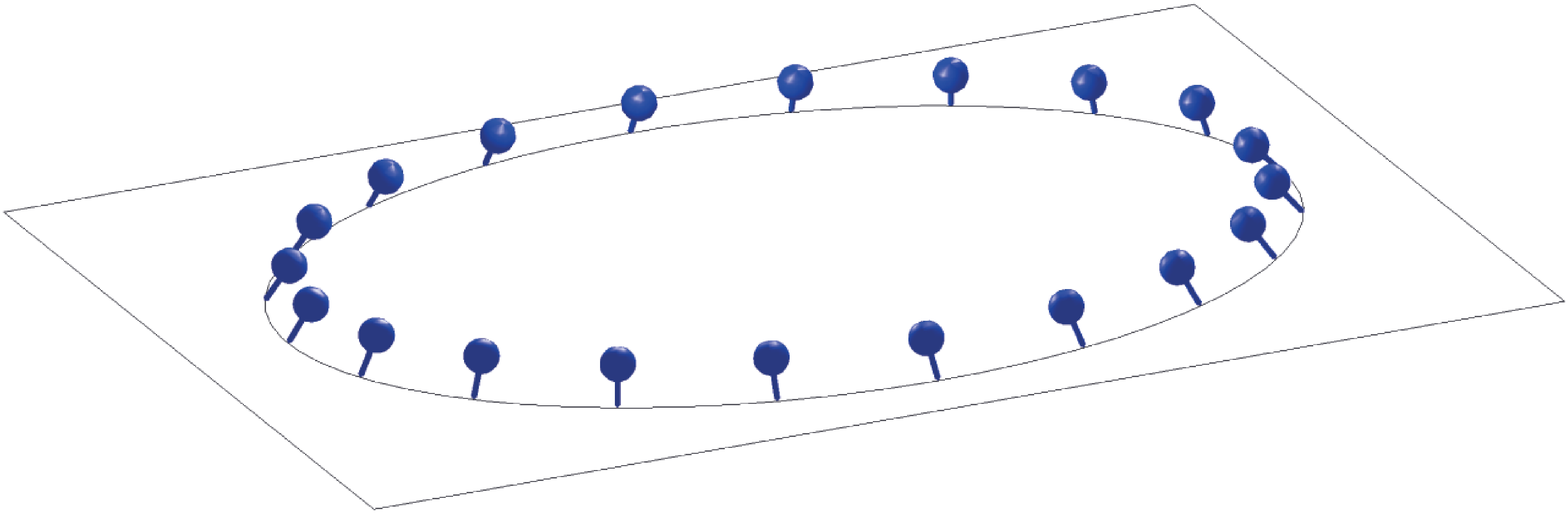}
  \includegraphics[width=.495\textwidth]{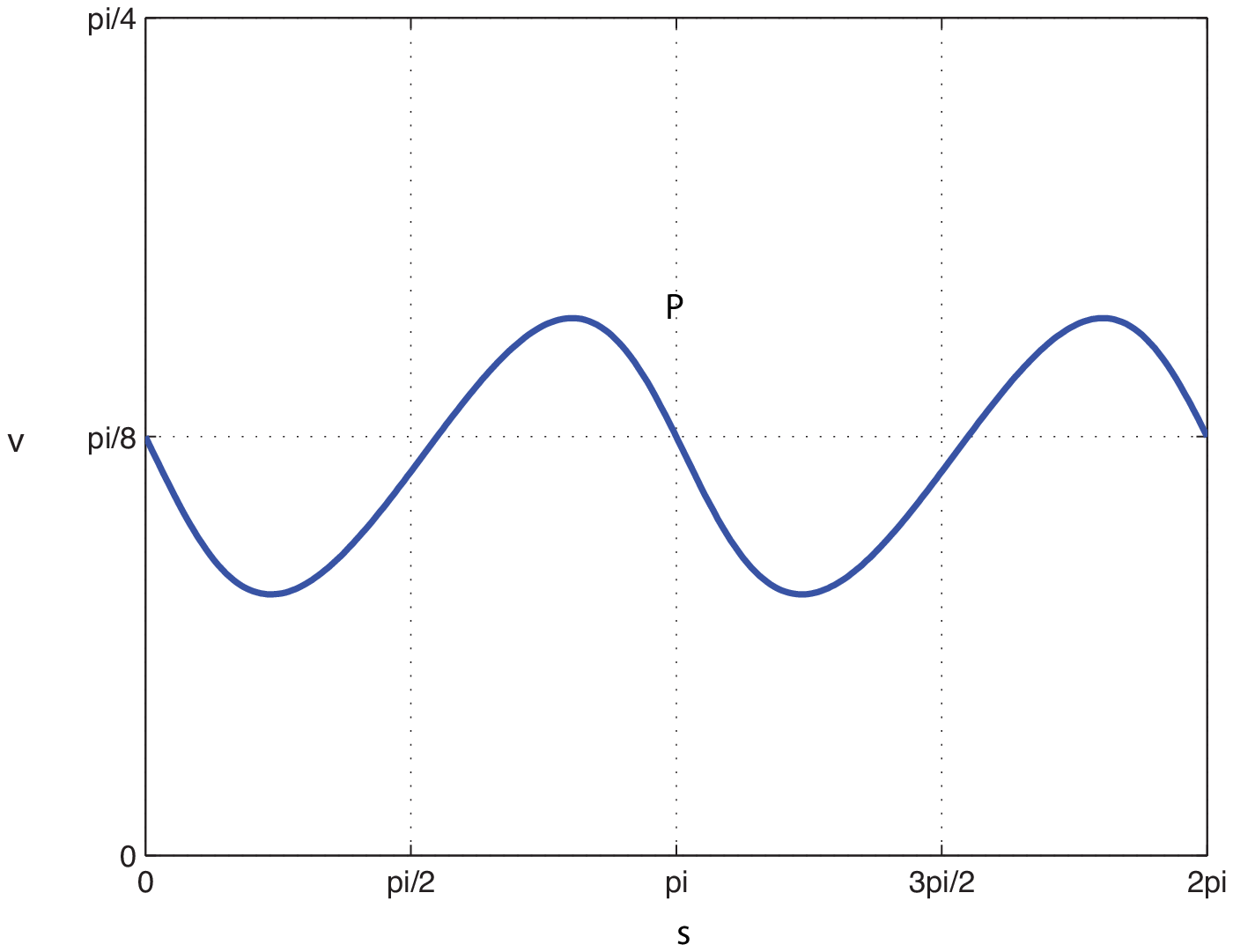}
}
\caption{VHC for the ellipse in Example~\ref{ex:ellipse}.}
\label{fig:ellipse:virtual_constraint}
\end{figure}
}
\end{example}

\section{Motion on the constraint manifold}\label{sec:constrained_motion}
Having chosen $\tilde \delta(t)= \epsilon \tilde \mu(t)$ as in
Proposition~\ref{prop:generator:periodic_solutions}(ii), and having
obtained a VHC $\varphi= \Phi(s)$, the next step is to analyse the
{\em reduced dynamics} of~\eqref{eq:sys} on the constraint manifold
$\Gamma = \{ (q,\dot q) \in \cX : \varphi = \Phi(s), \ \dot \varphi =
\Phi'(s) \dot s\}$. These are the zero dynamics of~\eqref{eq:sys} with
output function $H(\varphi,s)=\varphi - \Phi(s)$. To this end, we
impose that $[(d/dt) (\Phi'(s) \dot s)] |_\Gamma = \ddot
\varphi|_\Gamma$.  Expanding both sides of the equation above, using
the expression of $\ddot \varphi$ in~\eqref{eq:sys},
identity~\eqref{eq:Phiprime}, and the fact that $\delta \neq 0$, we
obtain the feedback making $\Gamma$ invariant
\begin{equation}
\label{eqn_inv_feedback}
\begin{aligned}
& u = \frac{h^{-1}g \sin \Phi}{\delta} - \frac{ \dot s^2
}{\delta} \Big[ \Phi'' + \frac 1 h ( (1-h \kappa \sin \Phi) \kappa
  \|\sigma'\| \\
& + b \kappa' +b\kappa {\sigma'}\trans\sigma'' / \|\sigma'\|^2)
  \|\sigma'\| \cos \Phi \Big].
\end{aligned}
\end{equation}
Substituting feedback~\eqref{eqn_inv_feedback} in the $s$ dynamics, we
get the reduced dynamics on $\Gamma$
\begin{equation}\label{eq:motion_on_Gamma}
\ddot s = \Psi_1(s) + \Psi_2(s) \dot s^2,
\end{equation}
with
\begin{equation}\label{eq:Psi}
\begin{aligned}
&\Psi_1 = \frac{h^{-1}g s_{\Phi}}{\delta} \\
& \Psi_2= - \frac{1}{\delta} \Big[ \Phi'' + \frac 1 h ( (1-h
    \kappa s_{\Phi}) \kappa\|\sigma'\| \\ & + b \kappa'
    + b \kappa \sigma'{}\trans \sigma'' / \|\sigma'\|^2 )
    c_{\Phi} \|\sigma'\| \Big].
\end{aligned}
\end{equation}
System~\eqref{eq:motion_on_Gamma} describes the motion of
system~\eqref{eq:sys} on the constraint manifold $\Gamma$ in the
following sense. For a given initial condition $(s(0),\dot s(0)) =
(s_0,\dot s_0) \in [\Re]_T \times \Re$, let $(s(t),\dot s(t))$ be the
corresponding solution of~\eqref{eq:motion_on_Gamma}, and let
$(\varphi(t),\dot \varphi(t), s_1(t),\dot s_1(t))$ be the solution
of~\eqref{eq:sys} with initial condition $(\varphi(0),\dot
\varphi(0),s(0),\dot s(0)) = ( \Phi(s_0), \Phi'(s_0) \dot s_0,s_0,\dot
s_0) \in \Gamma$. Then, for all $t \geq 0$ $(s_1(t),\dot s_1 (t)) =
(s(t),\dot s(t))$, and $(\varphi(t),\dot \varphi(t)) =$ $(\Phi(s(t)),$
$\Phi'(s(t)) \dot s(t))$.  In order for the VHC $\varphi=\Phi(s)$ to
be compatible with the maneuvering problem, we need to verify whether
or not the rear wheel of the bicycle traverses the entire curve $\C$
with bounded speed, i.e., we need to show that for any solution
$(s(t),\dot s(t))$ of~\eqref{eq:motion_on_Gamma}, there exist $\bar
t>0$ and $\epsilon>0$ such that $\dot s(t)>\epsilon > 0$ for all $ t
\geq \bar t$, and $\lim\sup_{t \to \infty} \dot s(t) < \infty$. The
next result explores general properties of systems of the
form~\eqref{eq:motion_on_Gamma}. 
%
%
\begin{proposition}\label{prop:internal_dynamics_general_properties}
Consider a differential equation of the
form~\eqref{eq:motion_on_Gamma} with state space $\{(s,\dot s) \in
[\Re]_T \times \Re\}$. Assume that $\Psi_1, \Psi_2:[\Re]_T \to \Re$
are smooth functions such that $\Psi_1(s) > 0$ for all $s$ and
$\int_0^T \tilde \Psi_2(\tau) d\tau <0$. Then, there exists a smooth
function $\nu:[\Re]_T \to (0,\infty)$ such that the closed orbit $\cR
= \{(s,\dot s) \in [\Re]_T \times \Re: \dot s = \nu(s)\}$ is
exponentially stable for~\eqref{eq:motion_on_Gamma}, with domain of
attraction containing the set $\cD = \{(s,\dot s) \in [\Re]_T \times
\Re: \dot s \geq 0\}$.  {Thus,} for all initial conditions in
$\cD$, the solution $(s(t),\dot s(t))$ of~\eqref{eq:motion_on_Gamma}
converges to the unique asymptotically stable limit cycle $\cR$.
\end{proposition}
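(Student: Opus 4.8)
The plan is to reduce the second-order system~\eqref{eq:motion_on_Gamma} to a scalar \emph{linear} periodic ODE by using the curve parameter $s$ as the new independent variable. On the region $\{\dot s>0\}$ I would set $y:=\dot s^2$ and regard $y$ as a function of $s$. Since $\frac{dy}{ds}=2\dot s\,\frac{d\dot s}{ds}=2\ddot s$, equation~\eqref{eq:motion_on_Gamma} becomes the linear $T$-periodic equation
\begin{equation}\label{eq:linear_spatial}
\frac{dy}{ds}=2\tilde\Psi_2(s)\,y+2\tilde\Psi_1(s).
\end{equation}
This is the crucial observation: the quadratic-in-$\dot s$ nonlinearity is exactly what makes $y=\dot s^2$ obey a \emph{linear} equation on the universal cover, so the analysis reduces to Floquet theory for a scalar linear periodic ODE.

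For~\eqref{eq:linear_spatial} I would introduce $\Lambda(s):=\exp\!\big(2\int_0^s\tilde\Psi_2(\tau)\,d\tau\big)$ and write the solution by variation of constants, $y(s)=\Lambda(s)\big[y(0)+2\int_0^s\Lambda(\tau)^{-1}\tilde\Psi_1(\tau)\,d\tau\big]$. The monodromy over one period is multiplication by $\Lambda(T)=\exp\!\big(2\int_0^T\tilde\Psi_2\,d\tau\big)$, and the hypothesis $\int_0^T\tilde\Psi_2<0$ gives $0<\Lambda(T)<1$. Hence~\eqref{eq:linear_spatial} has a unique $T$-periodic solution $y_\ast$, obtained by solving $y_\ast(0)=\Lambda(T)\big[y_\ast(0)+2\int_0^T\Lambda(\tau)^{-1}\tilde\Psi_1\,d\tau\big]$ for $y_\ast(0)$; since $\Lambda>0$, $\tilde\Psi_1>0$, and $1-\Lambda(T)>0$, this yields $y_\ast(0)>0$, and the variation-of-constants formula then shows $y_\ast(s)>0$ for all $s$. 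The difference of any two solutions satisfies the homogeneous equation $z'=2\tilde\Psi_2 z$, so every solution of~\eqref{eq:linear_spatial} converges to $y_\ast$ exponentially in $s$, contracting by the factor $\Lambda(T)$ per period. I would then define $\nu:=\sqrt{y_\ast}\circ\pi^{-1}:[\Re]_T\to(0,\infty)$, smooth because $y_\ast$ is smooth and bounded away from $0$; the orbit $\cR=\{\dot s=\nu(s)\}$ corresponds to $y=y_\ast$.

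The remaining task is to transfer these conclusions back to the time-domain system on $\cD=\{\dot s\ge 0\}$. First, $\{\dot s=0\}$ contains no equilibria: if $\dot s=0$ then $\ddot s=\Psi_1(s)>0$, so the vector field points strictly into $\{\dot s>0\}$ and any solution starting in $\cD$ enters $\{\dot s>0\}$ immediately. Second, once $\dot s>0$ the variation-of-constants formula for~\eqref{eq:linear_spatial} shows $y(s)>0$ for all larger $s$ (both terms are positive), so $\dot s$ never returns to zero, $s(t)$ increases monotonically to $+\infty$, and $y$ remains bounded (the per-period contraction $\Lambda(T)<1$ together with bounded forcing precludes growth), ruling out finite-time blow-up and giving global forward existence. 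The convergence $y(s)\to y_\ast(s)$ then yields $\dot s(t)\to\nu(s(t))$, i.e.\ convergence of trajectories to $\cR$, and uniqueness of $\cR$ as a limit cycle follows from uniqueness of $y_\ast$.

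The main obstacle I anticipate is converting exponential-in-$s$ decay into the exponential-in-$t$ decay required by exponential stability of $\cR$. This is where boundedness of $y_\ast$ away from zero is essential: since $\dot s=\sqrt{y}$ is eventually bounded below by a positive constant, $s(t)$ grows at least linearly in $t$, so a decay factor of order $\Lambda(T)^{\lfloor s/T\rfloor}$ translates into genuine exponential decay in $t$. I would make this precise with a Lyapunov/Gr\"onwall estimate in the $(s,y)$ coordinates and then pull it back along the coordinate change $(s,\dot s)\mapsto(s,\dot s^2)$, which is a diffeomorphism on $\{\dot s>0\}$ and is locally Lipschitz up to the boundary, thereby obtaining exponential stability of $\cR$ in the original state space with domain of attraction containing all of $\cD$.
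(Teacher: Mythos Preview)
Your proposal is correct and follows essentially the same route as the paper: the substitution $y=\dot s^2$ with $s$ as independent variable, leading to the scalar linear $T$-periodic ODE with Floquet multiplier $\exp\big(2\int_0^T\tilde\Psi_2\big)\in(0,1)$, the unique positive $T$-periodic solution $y_\ast$, and $\nu=\sqrt{y_\ast}$. The paper frames this on the universal cover $\Re\times\Re$ and phrases the contraction via the stroboscopic map $z_{k+1}=\phi(T)(z_k-\bar z_0)+\bar z_0$, but the content is the same; if anything, you are more explicit than the paper about converting exponential decay in $s$ to exponential decay in $t$ via the lower bound on $\dot s$.
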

\begin{remark}\label{rem:doa}
{\rm It can be shown that the domain of attraction of the limit cycle
  $\cR$  is $\{(s,\dot s) : \dot s > -
  \nu(s)\}$.}
\end{remark}
\begin{remark}\label{rem:shiriaev}
{\rm Proposition~\ref{prop:internal_dynamics_general_properties} has
  general implications. Consider a Lagrangian control system of the
  form~\eqref{eq:lagrangian} with configuration vector $q=
  (q_1,\ldots, q_n)$ and degree of underactuation one. Consider a VHC
  of the form $q_i = \Phi_i(q_n)$, $i=1,\ldots,n-1$, and suppose $q_n
  \in [\Re]_T$. For this system, it is shown
  in~\cite{WesGriKod03,ShiPerWit05,MagCon13} that the reduced dynamics
  have the form~\eqref{eq:motion_on_Gamma}, with $s$ replaced by
  $q_n$. In this context,
  Proposition~\ref{prop:internal_dynamics_general_properties} gives
  sufficient conditions under which the reduced dynamics have an
  asymptotically stable closed orbit, implying that they are not
  Lagrangian. On the other hand, in Theorem~2
  of~\cite{Shiriaev2006900} (see also~\cite{ShiPerWit05}) it is shown
  that for any $(s_0,\dot s_0)$, system~\eqref{eq:motion_on_Gamma}
  possesses an ``integral of motion''\footnote{An integral of motion
    of~\eqref{eq:motion_on_Gamma}, or first integral, is a real-valued
    function of the state whose value is constant along all solutions
    of~\eqref{eq:motion_on_Gamma} (see, e.g.,~\cite{Har02}). The
    function $I(s,\dot s)$ is only constant along the solution
    of~\eqref{eq:motion_on_Gamma} with initial condition $(s(0),\dot
    s(0)) = (s_0,\dot s_0)$. It is not constant along other solutions,
    because the Lie derivative of the function $I(s,\dot s)$ along the
    vector field of~\eqref{eq:motion_on_Gamma} is not zero. In our
    previous work~\cite{MagCon13} we have shown that if $M$ and $V$
    are well-defined, then the correct integral of motion
    of~\eqref{eq:motion_on_Gamma} is $E(s,\dot s) = (1/2) M(s) \dot
    s^2 + V(s)$.} $I(s,\dot s) = \dot s^2 +2 V(s)/ M(s) - \dot
  s_0^2/M(s)$, where $ M(s) = \exp \left\{ -2 \int_{s_0}^s
  \Psi_2(\tau) d \tau \right\}$, $V(s) = -\int_{s_0}^s \Psi_1(\mu)
  M(\mu) d\mu$.  This integral of motion predicted by Theorem~2
  of~\cite{Shiriaev2006900} seemingly contradicts
  Proposition~\ref{prop:internal_dynamics_general_properties}. Indeed,
  if $M(s)$ and $V(s)$ are well-defined functions, letting $L(s,\dot
  s) = (1/2) M(s) \dot s^2 - V(s)$ one has that $\frac{d}{dt}
  \frac{\partial L}{\partial \dot s} - \frac{\partial L}{\partial s} =
  0$, and hence the reduced dynamics~\eqref{eq:motion_on_Gamma} are
  Lagrangian. This fact rules out the existence of isolated closed
  orbits for~\eqref{eq:motion_on_Gamma} predicted by
  Proposition~\ref{prop:internal_dynamics_general_properties}.  This
  contradiction is due to the fact that Theorem~2
  in~\cite{Shiriaev2006900} implicitly assumes that the functions $M(s)$
  and $V(s)$ are well-defined. This is always true if $s \in
  \Re$. However, when $s \in [\Re]_T$, the functions $M(s)$ and $V(s)$
  may be multi-valued, in which case the Lagrangian $L(s,\dot s)$ is
  undefined. Indeed, the assumptions of
  Proposition~\ref{prop:internal_dynamics_general_properties} imply
  that both $M(s)$ and $V(s)$ are multi-valued. We refer the reader to
  Section IV of~\cite{MagCon13} for a detailed discussion on this
  subject, and to~\cite{MohMagCon13} for necessary and sufficient
  conditions under which~\eqref{eq:motion_on_Gamma} is Lagrangian.}
\end{remark}
\begin{remark}\label{rem:Lagrange}
{\rm In Lagrangian mechanics, the Lagrange-d'Alembert principle
  implies that the enforcement of an ideal holonomic constraint (i.e.,
  a constraint for which the constraint forces do not produce work) on
  a Lagrangian system gives rise to a reduced Lagrangian
  system. Proposition~\ref{prop:internal_dynamics_general_properties}
  shows that this result is not true when enforcing VHCs on Lagrangian
  control systems. There is, therefore, a sharp difference between
  {\em ideal} and {\em virtual} holonomic constraints when it comes to
  the reduced motion they induce. This difference is due to the fact
  that VHCs are enforced through forces produced by feedback
  control. For underactuated systems such as the constrained Getz's
  bicycle model in~\eqref{eq:sys}, the control forces produce work.  }
\end{remark}

\begin{remark}{\rm
In the context of walking robots,~\cite{WesGriKod03} proved that under
certain conditions, the hybrid zero dynamics subject to a VHC exhibit
an exponentially stable hybrid limit cycle. The mechanism enabling
this exponential stability property is the jump map representing the
impact of the robot's foot with the
ground. Proposition~\ref{prop:internal_dynamics_general_properties}
shows that VHCs may induce stable limit cycles even when the control
system has no jumps.}
\end{remark}

%
%
\begin{proof}
Consider the differential equation
\begin{equation}\label{eq:constrained_system_lifted}
\ddot x = \tilde \Psi_1 (x) + \tilde \Psi_2(x) \dot x^2.
\end{equation}
Let $\bar \pi: \Re \times \Re \to [\Re]_T \times \Re$ be defined as
$\bar \pi(x,\dot x) = (\pi(x),\dot x)$. Then, it is readily seen that
the vector fields in~\eqref{eq:motion_on_Gamma}
and~\eqref{eq:constrained_system_lifted} are $\bar
\pi$-related~\cite{Lee13}. It follows that if $(x(t),\dot x(t))$ is a
solution of~\eqref{eq:constrained_system_lifted}, then $(s(t),\dot
s(t)) = (\pi(x(t)),\dot x(t))$ is a solution
of~\eqref{eq:motion_on_Gamma}.  We will show that there exists a
smooth $T$-periodic function $\tilde \nu : \Re \to (0,+\infty)$ such
that the set $\tilde \cR = \{(x,\dot x) \in \Re^2: \dot x = \tilde
\nu(x)\}$ is an exponentially stable orbit
of~\eqref{eq:constrained_system_lifted}, with domain of attraction
containing $\tilde \cD = \{(x,\dot x)\in \Re^2: \dot x\geq 0\}$. Then,
setting $\nu = \tilde \nu \circ \pi^{-1}$,
by~\cite[Theorem~4.29]{Lee13} we will obtain a smooth function
$[\Re]_T \to (0,+\infty)$, and the set $\cR= \{(s,\dot s) \in [\Re]_T
\times \Re: \dot s = \nu(s)\}$ will be an exponentially stable
closed-orbit of~\eqref{eq:motion_on_Gamma} with domain of attraction
containing $\cD$, proving the proposition.  The set $\tilde \cD$ is
positively invariant for~\eqref{eq:constrained_system_lifted} because,
by assumption, $\ddot x|_{\dot x=0} = \tilde \Psi_1(x) >0$ for all
$x\in \Re$.  In the rest of the proof we will restrict initial
conditions to lie in $\tilde \cD$. Letting\footnote{This substitution
  is standard. See, for instance,~\cite[Section 2.9.3-2, item
    25]{Pol03}. It is also used in~\cite{Shiriaev2006900}.} $z = \dot
x^2$, we have $\dot z = 2\dot x (\tilde \Psi_1(x) + \tilde \Psi_2(x)
z)$. For all $(x,\dot x) \in \tilde \cD$, we have $\dot x>0$, so we
can use $x$ as a time variable:
\begin{equation}\label{eq:core_subsystem}
\frac { d z } {dx}  = 2\tilde \Psi_1(x) + 2\tilde \Psi_2(x) z.
\end{equation}
The above is a scalar linear $T$-periodic system. Letting $\phi(x) =
\exp(2\int_0^x \tilde \Psi_2(\tau) d \tau)$, the solution
of~\eqref{eq:core_subsystem} with initial condition $z(x_0)=z_0$ is
\[
z(x) = \phi(x) \phi(x_0)^{-1} z_0 + 2\int_{x_0}^x \phi(x) \phi^{-1}
(\tau) \tilde \Psi_1(\tau) d \tau.
\]
Note that, for any integer $k$, $\phi(x+k T) = \phi(T)^k \phi(x)$.
System~\eqref{eq:core_subsystem} has a $T$-periodic solution if and
only if there exists $\bar z_0$ such that the solution $z(x)$
of~\eqref{eq:core_subsystem} with initial condition $z(x_0) = \bar
z_0$ satisfies, $z(x_0+T) = \bar z_0$, or
\begin{equation}\label{eq:z0}
\bar z_0 = \phi(T) \bar z_0 + 2 \int_{x_0}^{x_0+T}\phi(x_0+T)
\phi^{-1}(\tau) \tilde \Psi_1(\tau) d \tau.
\end{equation}
By assumption, $0< \phi(T) <1$, so there is a unique $\bar z_0>0$
solving~\eqref{eq:z0}, implying that there is a unique $T$-periodic
solution $\bar z:\Re \to \Re$ of~\eqref{eq:core_subsystem}. Since for
all $x\in \Re$, $z=0 \implies (d z/ dx) = 2\tilde \Psi_1(x)>0$, the
set $\{(z,x): z>0\}$ is positively invariant
for~\eqref{eq:core_subsystem}, and therefore the $T$-periodic function
$\bar z$ satisfies $\image(\bar z) \subset (0,\infty)$.  Let $z:\Re
\to \Re$ be any other arbitrary solution of~\eqref{eq:core_subsystem}
(therefore, not necessarily $T$-periodic). Let $k$ be a nonnegative
integer and denote $z_k:=z(x_0+k T)$, $\phi_k:=\phi(x_0+kT)$,
$x_k:=x_0+kT$. Then,
\[
\begin{aligned}
&z_{k+1} = \phi_{k+1} \phi_k^{-1} z_k + 2\int_{x_k}^{x_{k+1}}
  \phi_{k+1} \phi^{-1}(\tau) \tilde \Psi_1(\tau) d \tau \\
& = \phi(T)  z_k +2\int_{x_0}^{x_1} \phi_1 \phi(T)^k
\phi^{-1} (\tau + kT) \tilde \Psi_1(\tau + kT) d \tau \\
& = \phi(T)  z_k +2\int_{x_0}^{x_1} \phi_1
\phi^{-1}(\tau) \tilde \Psi_1(\tau) d \tau \; \text{($\tilde
  \Psi_1$ is $T$-periodic)}\\
& = \phi(T)  ( z_k - \bar z_0) + \bar z_0. \quad
\text{(by~\eqref{eq:z0})} 
\end{aligned}
\]
In light of the above, letting $\tilde z_k = z_k -\bar z_0$, we
have $\tilde z_{k+1} = \phi(T) \tilde z_k$. Since $0<\phi(T)<1$, the
origin of this discrete-time system is globally exponentially stable,
proving that the $T$-periodic solution $\bar z:\Re\to(0,\infty)$ is
globally exponentially stable for~\eqref{eq:core_subsystem}. Define
$\tilde \nu:\Re \to (0,\infty)$ as $\nu:= \sqrt{\bar z}$, and return
to system~\eqref{eq:motion_on_Gamma}. For an arbitrary initial
condition $(x(0),\dot x(0)) \in \tilde \cD$, the solution $(x(t),\dot
x(t))$ satisfies $\dot x(t) > 0$ for all $t >0$, and $\dot x(t)
=\sqrt{ z(x(t))}$, where $z:\Re\to\Re$ is the solution
of~\eqref{eq:core_subsystem} with initial condition $z(x(0)) =(\dot
x(0))^2$. Since the solution $\bar z:\Re \to(0,\infty)$ is globally
exponentially stable for~\eqref{eq:core_subsystem}, the set $\tilde
\cR$ is exponentially stable for~\eqref{eq:constrained_system_lifted}
with domain of attraction containing $\tilde \cD$. Going back to
$(s,\dot s)$ coordinates, this implies that $\cR = \{(s,\dot s) \in
[\Re]_T \times \Re: \dot s = \nu(s)\}$ is exponentially stable
for~\eqref{eq:motion_on_Gamma}, and its domain of attraction contains
$\cD$. Note that $\cR$ is a simple closed curve in $[\Re]_T \times
\Re$.  We are left to show that $\cR$ is a closed orbit
of~\eqref{eq:motion_on_Gamma}. The set $\cR$ is closed, invariant, and
no proper subset of it has these properties. For, if there existed a
closed and invariant proper subset $\cR' \subset \cR$, then any
solution $(s(t),\dot s(t))$ originating in $\cR'$ would not traverse
the entire curve $\cR$, contradicting the fact that, for all $(s,\dot
s)\in\cR$, $\dot s >0$. $\cR$ is, therefore, a minimal set
for~\eqref{eq:motion_on_Gamma}. By~\cite[Theorem 12.1]{Har02}, $\cR$
is a closed orbit.
\end{proof}
We now show that if $\C$ is sufficiently long as compared to $b$ and
$h$, the bicycle satisfies the hypotheses of
Proposition~\ref{prop:internal_dynamics_general_properties}, and so
the reduced motion~\eqref{eq:motion_on_Gamma} satisfies the
requirements of the maneuvering problem.
\begin{proposition}\label{prop:internal_dynamics_bicycle}
Assume that the  curvature of $\C$ satisfies the inequality
\begin{equation}\label{eq:curvature_bound}
\frac 1 L \int_0^T \tilde \kappa(\tau) \|\tilde \sigma'(\tau)\| d\tau
< \frac{h}{b^2+h^2},
\end{equation}
where $L=\int_0^T \|\tilde \sigma'(\tau)\| d \tau$ is the length of
$\C$. Then, any VHC $\varphi = \Phi(s)$ with $\image(\Phi) \subset
(0,\pi/2)$ yields functions $\Psi_1,\Psi_2:[\Re]_T \to \Re$
in~\eqref{eq:Psi} that satisfy the hypotheses of
Proposition~\ref{prop:internal_dynamics_general_properties}. Hence, in
particular, the reduced dynamics~\eqref{eq:motion_on_Gamma} have an
exponentially stable closed orbit.
\end{proposition}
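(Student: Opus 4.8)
The plan is to verify the two hypotheses of Proposition~\ref{prop:internal_dynamics_general_properties} for the functions $\Psi_1,\Psi_2$ given in~\eqref{eq:Psi}, namely that $\Psi_1(s)>0$ for all $s$ and that $\int_0^T \tilde\Psi_2(\tau)\,d\tau<0$. The first hypothesis is immediate: from~\eqref{eq:Psi}, $\Psi_1 = h^{-1} g s_\Phi/\delta$, and since $\image(\Phi)\subset(0,\pi/2)$ we have $s_\Phi = \sin\Phi>0$; moreover $\delta>0$ because $\delta=\epsilon\mu$ with $\epsilon,\mu>0$ (indeed $\epsilon\in[\epsilon^-,\epsilon^+]$ with $\epsilon^->0$). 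Hence $\Psi_1(s)>0$ everywhere, and the real work lies entirely in the integral inequality on $\Psi_2$.

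The harder task is to show $\int_0^T \tilde\Psi_2(\tau)\,d\tau<0$. The first step here is to lift the expression for $\Psi_2$ in~\eqref{eq:Psi} to the real line via $\pi$, writing everything in terms of the $T$-periodic functions $\tilde\Phi$, $\tilde\kappa$, $\tilde\sigma'$, etc. A key simplification is that the term $\Phi''/\delta$ integrates to zero over one period: since $\tilde\Phi$ is $T$-periodic and smooth, and $\delta = \epsilon\mu$ is the fixed periodic function from the generator, one can use identity~\eqref{eq:Phiprime} to eliminate $\Phi''$ in favor of lower-order terms, or argue more directly that the contribution of the curvature-derivative term $b\kappa'$ also integrates away by periodicity (it is a total derivative of a periodic function, up to the $\delta$ weight). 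The plan is to isolate the sign-definite part of the integrand and bound the remaining terms.

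The main obstacle I expect is handling the mixed-sign structure of $\Psi_2$ after the total-derivative terms are discarded. Writing the surviving integrand schematically as a weight $c_\Phi\|\sigma'\|/(h\delta)$ times $\big[(1-h\kappa s_\Phi)\kappa\|\sigma'\| + b\kappa \sigma'^\top\sigma''/\|\sigma'\|^2\big]$, the factor $c_\Phi=\cos\Phi>0$ and $\|\sigma'\|>0$, so the sign is governed by the bracket. The dominant positive contribution is $\kappa^2\|\sigma'\|$ (since $\kappa>0$ by Assumption~\ref{assump_convex}), which is exactly what we want, as it forces $\Psi_2<0$ pointwise in the leading order; the correction terms involving $h$ and $b$ must be controlled using the curvature bound~\eqref{eq:curvature_bound}. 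I anticipate that one rewrites the average of the bracket, uses $\int_0^T \tilde\kappa\|\tilde\sigma'\|\,d\tau$ versus the length $L=\int_0^T\|\tilde\sigma'\|\,d\tau$, and recognizes the quantity $\tfrac{1}{L}\int \tilde\kappa\|\tilde\sigma'\|\,d\tau < h/(b^2+h^2)$ as precisely the threshold that makes the net integral negative.

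The final step is to combine these estimates: after showing that the total-derivative terms vanish and that the $\delta$ weight (being positive and periodic) does not change the sign of the dominant contribution, the curvature bound~\eqref{eq:curvature_bound} delivers $\int_0^T\tilde\Psi_2\,d\tau<0$. Once both hypotheses are verified, Proposition~\ref{prop:internal_dynamics_general_properties} applies directly and yields the exponentially stable closed orbit, completing the proof. The delicate point throughout will be bookkeeping the positive weights and confirming that they can be factored out or bounded so that the sign condition reduces cleanly to the stated geometric inequality relating the average curvature to $h/(b^2+h^2)$.
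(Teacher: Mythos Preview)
Your verification of $\Psi_1>0$ is correct, and your instinct to use identity~\eqref{eq:Phiprime} to handle $\Phi''$ is the right one. But the mechanism you describe is not: the term $\Phi''/\delta$ does \emph{not} integrate to zero over a period, because the weight $1/\delta$ is not constant. Likewise, $b\kappa' c_\Phi\|\sigma'\|/(h\delta)$ is not a total derivative of a periodic function --- the caveat ``up to the $\delta$ weight'' is exactly where the argument breaks.

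What actually works is to differentiate~\eqref{eq:Phiprime} and substitute the resulting expression for $\Phi''$ into~\eqref{eq:Psi}. The point is that this produces \emph{exact algebraic cancellations}, not vanishing integrals: the $b\kappa'$ term and the $b\kappa\,{\sigma'}^\top\sigma''/\|\sigma'\|^2$ term both disappear identically (so your ``surviving integrand'' is wrong --- you retain the latter term when in fact it cancels). What remains is
\[
\Psi_2 \;=\; -\frac{\delta'}{\delta} \;-\; \frac{\kappa\|\sigma'\|}{\delta h}\Big[\,b\,\delta\, s_\Phi \;-\; \|\sigma'\|c_\Phi\Big(\kappa s_\Phi\,\frac{b^2+h^2}{h} - 1\Big)\Big].
\]
Now $\delta'/\delta$ genuinely integrates to zero (as the logarithmic derivative of a positive periodic function), and the surviving bracket carries the factor $(b^2+h^2)/h$ --- produced by combining the original $-h\kappa s_\Phi$ in~\eqref{eq:Psi} with the extra $-(b/h)^2\kappa s_\Phi$ coming from the $\Phi''$ substitution. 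Without this explicit computation you never see where $b^2+h^2$ arises, and the connection to the curvature bound~\eqref{eq:curvature_bound} remains unestablished. From here one drops the manifestly negative term $-b\kappa\|\sigma'\|s_\Phi/h$, uses $s_\Phi\leq 1$, and bounds the positive prefactor to reduce the sign of $\int_0^T\tilde\Psi_2\,d\tau$ to exactly the inequality~\eqref{eq:curvature_bound}.
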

\begin{remark}
{\rm The integral $(1/L)\int_0^T \tilde \kappa(\tau) \|\tilde
  \sigma'(\tau)\| d\tau$ is equal to ({\em turning number} of $\C)
  \times 2\pi/{L}$. {The turning number of $\C$ is the
    number of counterclockwise revolutions that the tangent vector to
    $\C$ makes as its base point is moved once around $\C$ in a way
    consistent with the orientation of $\C$. The turning number of a
    Jordan curve is $\pm 1$, and for curves satisfying
    Assumption~\ref{assump_convex} it is always $1$. Thus,
    condition~\eqref{eq:curvature_bound} can be written as $L > 2 \pi
    (b^2+h^2)/h$. It requires the curve to be sufficiently long as
    compared to the bicycle parameters $b$ and $h$.}}
\end{remark}
\begin{remark}\label{rem:not_EL}
{\rm Proposition~\ref{prop:internal_dynamics_bicycle} implies that
  when a VHC $\varphi = \Phi(s)$ with $\image(\Phi) \subset (0,\pi/2)$
  is enforced on the Lagrangian control system~\eqref{eq:sys}, the
  resulting reduced dynamics are not Lagrangian.}
\end{remark}

\begin{proof}
The VHC $\varphi = \Phi(s)$ arising from
Proposition~\ref{prop:generator:periodic_solutions}, part (ii),
satisfies $\Phi(s) \in (0, \pi/2)$ for all $s\in [\Re]_T$, so that
$\sin \Phi(s) >0$. Recall that $\Phi$ satisfies~\eqref{eq:Phiprime}
with $\delta(s) =\epsilon \mu(s)>0$ for all $s \in [\Re]_T$. This
implies that $\Psi_1(s)>0$. Moreover, $\Phi'' = \delta' -r \kappa'
\|\sigma'\| c_\Phi - (r \kappa\|\sigma'\|)^2 s_\Phi c_\Phi +
r \kappa \|\sigma'\|\delta s_\Phi -r \kappa {\sigma'}\trans
\sigma'' / \|\sigma'\|$.  Substituting this expression
in~\eqref{eq:Psi}, we get
\[
\begin{aligned} 
\Psi_2 &= -\frac{\delta'}{\delta} - \frac {\kappa\|\sigma'\|} {\delta
  h} \Big[ b \delta s_\Phi - \|\sigma'\|c_\Phi \big( \kappa s_\Phi
  \frac{b^2+h^2} h -1 \big) \Big] \\
&\leq -\frac{\delta'}{\delta} + \frac{\kappa\|\sigma'\|^2} {\delta h}
c_\Phi \big( \kappa s_\Phi \frac{b^2+h^2} h -1 \big) .
\end{aligned}
\]
Since $\int_0^T \tilde \delta'(\tau)/\tilde \delta(\tau) = \ln \tilde
\delta(T) - \ln \tilde \delta(0) =0$, using the fact that
$\image(\tilde \Phi) \subset (\Phi^-,\Phi^+) \subset (0,\pi/2)$, we have
\[
\begin{aligned}
&\int_0^T \tilde \Psi_2(\tau) d\tau \leq \int_0^T \frac{\tilde
    \kappa\|\tilde \sigma'\|^2} {\tilde \delta h} c_{\tilde \Phi}
  \left( \tilde \kappa s_{\tilde \Phi} \frac{b^2+h^2} h -1 \right)
  d\tau \\
&\leq \max_t \left(\frac{\tilde \kappa\|\tilde \sigma'\|} {\tilde
    \delta h}\right) c_{\Phi^-} \int_0^T \left(\tilde \kappa
  \frac{b^2+h^2} h -1 \right) \|\tilde \sigma'\| d\tau \\
&\leq \max_t \left(\frac{\tilde \kappa\|\tilde \sigma'\|^2} {\tilde
    \delta h}\right) c_{\Phi^-} \left( \frac{b^2+h^2} h \int_0^T
  \tilde \kappa \|\tilde \sigma'\| d\tau -L \right)\\
&<0.
\end{aligned} \]

\end{proof}
\begin{example}
{\rm We return to the ellipse of Example~\ref{ex:ellipse} and the VHC
  displayed in Figure~\ref{fig:ellipse:virtual_constraint}. For this
  example,
$(1/2\pi)\int_0^{2\pi} \tilde \kappa(\tau) d\tau \approx 0.0792$, and
  $h/(b^2+h^2)=0.6711$ and thus~\eqref{eq:curvature_bound} is
  satisfied. Indeed, one can numerically check that $\int_0^{2\pi} \tilde
  \Psi_2(\tau) d\tau \approx -105.1<0$, and
  Proposition~\ref{prop:internal_dynamics_general_properties} applies.
  The phase portrait of the dynamics on the constraint manifold is
  displayed in Figure~\ref{fig:ellipse:phase_portrait}. The figure
  illustrates the set $\cR$, corresponding to the steady-state
  velocity profile of the bicycle on $\Gamma$. The domain of
  attraction of $\cR$, shaded in the figure, is the set $\{(s,\dot s):
  \dot s > -\nu(s)\}$, as pointed out in Remark~\ref{rem:doa}.
\begin{figure}
\psfrag{s}{$s$} \psfrag{t}{$\dot s$}
\psfrag{R}{$\cR$}
\centerline{\includegraphics[width=.5\textwidth]{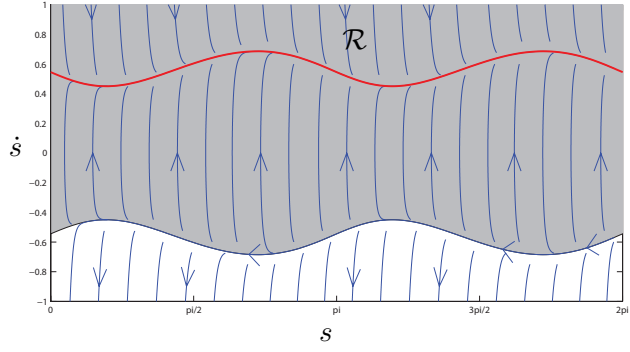}}
\caption{Phase portrait of the dynamics on $\Gamma$ and set $\cR$ for
  the ellipse in Example~\ref{ex:ellipse}. The shaded region is the
  domain of attraction of $\cR$. Since $s \in [\Re]_{2\pi}$, the $s$
  axis wraps around, and points on the lines $s = 0$ and $s = 2\pi$
  are identified in the figure, from which it follows that $\cR$ is a
  closed orbit.}
\label{fig:ellipse:phase_portrait}
\end{figure}
}

\end{example}

\begin{example}
{\rm Let now $\C$ be a circle of radius $R$ with parametrization
  $\sigma(s) =(R \cos s,R \sin s)$. We have $T=2\pi$, $\|\sigma'(s)\|
  \equiv R$ and $\kappa(s) \equiv 1/R$, and so $a(s) \equiv
  bh^{-1}$. For any $x_0 \in (0,\pi/2)$, picking $\tilde \delta= bh^{-1}
  \cos x_0$, as in Remark~\ref{rem:mu}, we obtain the constant VHC
  $\varphi=x_0$. Equation~\eqref{eq:core_subsystem} becomes
\[
\frac{dz}{dx} =\frac{2g}{b} \tan x_0 - \frac{2}{bR} ( 1-
(h/R)\sin x_0) z.
\]
The above is a linear time-invariant system with constant input which
is stable if $R > h \sin x_0$.  The periodic solution $\bar z(t)$ in
this case is simply the equilibrium of the system above, $\bar z= g
R^2 \tan x_0 / (R - h \sin x_0) $, and thus the asymptotic velocity of
the bicycle on $\Gamma$ is constant, and reads as $\nu = R \sqrt{g
  \tan x_0 / (R - h \sin \Phi_0)}$.  It can be verified that $\nu$ is
an increasing function of $x_0$. The conclusion is that the bicycle
can travel around the circle with any constant roll angle in the
interval $(0,\pi/2)$.  In steady-state, its speed is constant. The
larger is the roll angle $x_0$, the higher is the asymptotic speed of
the bicycle.  }
\end{example}

\section{Solution of the maneuvering problem}\label{sec:solution}
\begin{theorem}\label{thm:solution}
Suppose that the curvature of $\C$ satisfies
inequality~\eqref{eq:curvature_bound}. If $\varphi=\Phi(s)$ is a VHC
such that $\Phi(s) \in
(0,\pi/2)$ for all $ s \in [\Re]_T$, then the feedback
\begin{equation}\label{eq:stabilizing_feedback}
\begin{aligned}
u& =\frac{1}{\Delta(q)}\Big( \frac 1 h g \sa - \Big( \Phi'' + \frac 1
h\Big( (1-h \kappa \sa) \kappa \|\sigma'\|\\
&+ b \kappa'+\frac{b \kappa {\sigma'}\trans \sigma''}{\|\sigma'\|^2}\Big) \ca
\|\sigma'\|\Big) \dot s^2 + K_1 e + K_2 \dot e\Big),
\end{aligned}
\end{equation}
where $\Delta(q) = \Phi'(s) + a(s) \ca$,
$e= \varphi - \Phi(s)$, $\dot e =\dot \varphi - \Phi'(s)\dot s$, and
$K_1$, $K_2$ are positive design parameters, solves the maneuvering
problem and has the following properties:
\begin{enumerate}[(i)]
\item The constraint manifold $\Gamma$ is invariant and locally
  exponentially stable for the closed-loop
  system~\eqref{eq:sys},~\eqref{eq:stabilizing_feedback}.

\item There exists a $C^1$ function $\nu: [\Re]_T
  \to (0,\infty)$  such that the closed orbit $\bar \cR =
  \{(q,\dot q) \in \Gamma: \dot s=\nu(s)\}$ is asymptotically stable
  for the closed-loop system, and its domain of attraction is a
  neighbourhood of the set $\{(q,\dot q) \in \Gamma : \dot s>0\}$.

\item For initial conditions in the domain of attraction of $\bar
  \cR$, the rear wheel of the bicycle traverses the entire curve $\C$,
  and its speed is periodic in steady-state.

\end{enumerate}
\end{theorem}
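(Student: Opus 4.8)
The plan is to introduce transverse error coordinates $e = \varphi - \Phi(s)$ and $\dot e = \dot\varphi - \Phi'(s)\dot s$, in which $\Gamma = \{e = \dot e = 0\}$, and to show that the feedback~\eqref{eq:stabilizing_feedback} renders the error dynamics linear and exponentially stable. Differentiating gives $\ddot e = \ddot\varphi - \Phi''(s)\dot s^2 - \Phi'(s)\ddot s$; substituting $\ddot\varphi$ and $\ddot s = u$ from~\eqref{eq:sys} collects all control terms into $-(\Phi'(s) + a(s)\ca)\,u = -\Delta(q)\,u$. Since the first terms in the numerator of~\eqref{eq:stabilizing_feedback} reproduce exactly the drift of $\ddot e$, the substitution $u = [\text{numerator}]/\Delta(q)$ cancels the drift and yields $\ddot e = -K_1 e - K_2\dot e$. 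Because the VHC satisfies the feasibility condition of Lemma~\ref{lem:feasibility}, $\Delta(q)\big|_\Gamma = \Phi'(s) + a(s)\cos\Phi(s) \neq 0$, so by continuity $\Delta(q)\neq 0$ on a neighbourhood of $\Gamma$ and the feedback is well defined there. The scalar linear system $\ddot e + K_2\dot e + K_1 e = 0$ with $K_1,K_2>0$ shows that $\{e=\dot e=0\}$, hence $\Gamma$, is invariant and locally exponentially stable, establishing~(i).

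Next I would analyse the flow restricted to $\Gamma$. There the feedback~\eqref{eq:stabilizing_feedback} collapses to the invariance feedback~\eqref{eqn_inv_feedback}, so the motion is governed by the reduced dynamics~\eqref{eq:motion_on_Gamma}, $\ddot s = \Psi_1(s) + \Psi_2(s)\dot s^2$. Under hypothesis~\eqref{eq:curvature_bound} and $\image(\Phi)\subset(0,\pi/2)$, Proposition~\ref{prop:internal_dynamics_bicycle} guarantees that $\Psi_1,\Psi_2$ meet the hypotheses of Proposition~\ref{prop:internal_dynamics_general_properties}. The latter then supplies a smooth $\nu:[\Re]_T\to(0,\infty)$ such that $\cR = \{\dot s = \nu(s)\}$ is exponentially stable for the reduced dynamics, with domain of attraction containing $\{\dot s \geq 0\}$ (in fact $\{\dot s > -\nu(s)\}$, by Remark~\ref{rem:doa}). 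Lifting into $\Gamma$ yields the compact closed orbit $\bar\cR = \{(q,\dot q)\in\Gamma:\dot s = \nu(s)\}$, exponentially stable relative to the flow on $\Gamma$.

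The central step is to combine the transverse exponential stability of $\Gamma$ with the in-manifold exponential stability of $\bar\cR$. I would invoke a reduction principle: the closed loop is a cascade in which the exponentially stable linear block $(e,\dot e)$ drives the $(s,\dot s)$ block, whose dynamics at $e=\dot e=0$ are~\eqref{eq:motion_on_Gamma}. Since $\Gamma$ is locally exponentially stable and $\bar\cR\subset\Gamma$ is exponentially (hence asymptotically) stable for the restricted flow, the reduction principle gives asymptotic stability of $\bar\cR$ for the full system. The delicate part, and the main obstacle, is the domain-of-attraction claim: during the transient the nonvanishing $(e,\dot e)$ perturbs the $s$-dynamics away from~\eqref{eq:motion_on_Gamma}, and one must ensure the perturbed trajectory does not leave the open, robust domain $\{\dot s > -\nu(s)\}$ of $\cR$. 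Because the perturbation decays exponentially and $\cR$ is exponentially stable, a total-stability estimate confines trajectories starting near $\{(q,\dot q)\in\Gamma:\dot s>0\}$ and drives them to $\bar\cR$, yielding~(ii) with domain of attraction a neighbourhood of $\{(q,\dot q)\in\Gamma:\dot s>0\}$.

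Finally, for part~(iii) and the maneuvering problem I would read off the conclusions from~(ii). For initial conditions in the domain of attraction, $(s(t),\dot s(t))\to\bar\cR$ where $\dot s=\nu(s)>0$; hence $\dot s(t)>0$ for all large $t$, so $s(t)$ increases monotonically through $[\Re]_T$ and the rear wheel traverses the entire curve in one direction, while boundedness of $\nu$ on the compact circle bounds the speed. Since the motion on $\bar\cR$ is periodic, $\dot s(t)=\nu(s(t))$ is periodic in steady state. To verify the no-overturn requirement $|\varphi(t)|<\pi/2$, note $\varphi = \Phi(s) + e$ with $\image(\Phi)$ contained in a compact subinterval of $(0,\pi/2)$ and $e(t)$ decaying; restricting the initial conditions to a sufficiently small neighbourhood of $\Gamma$ keeps $\varphi(t)$ inside $(-\pi/2,\pi/2)$ for all $t\geq 0$.
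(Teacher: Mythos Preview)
Your proposal is correct and follows essentially the same approach as the paper: feedback-linearize the output $e=\varphi-\Phi(s)$ to obtain $\ddot e + K_2\dot e + K_1 e = 0$ for part~(i), invoke Propositions~\ref{prop:internal_dynamics_general_properties} and~\ref{prop:internal_dynamics_bicycle} for the in-manifold limit cycle, and combine the two via a reduction principle to get~(ii) and~(iii). The only notable difference is in how the domain-of-attraction neighbourhood is obtained: the paper argues more directly, citing the Seibert--Florio reduction principle for compact sets to conclude that $\bar\cR$ is asymptotically stable for the full system, and then simply observes that the domain of attraction of an asymptotically stable compact set is open, hence automatically a neighbourhood of $\{(q,\dot q)\in\Gamma:\dot s>0\}$ (which it already contains by the propositions); your route via an explicit cascade/total-stability estimate is also valid but more laborious than necessary.
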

\begin{remark}
{\rm Control law~(\ref{eq:stabilizing_feedback}) is {an input-output
    linearizing feedback for system~\eqref{eq:sys} with output
    $H(\varphi,s)=\varphi-\Phi(s)$. Letting $e= \varphi - \Phi(s)$,
    the closed-loop system satisfies $\ddot{e}=-K_1 e - K_2 \dot e$,
    and it exponentially stabilizes $\Gamma = \{(q,\dot q): e=\dot
    e=0\}$. Substituting $e=\dot e=0$
    in~(\ref{eq:stabilizing_feedback}) we recover the feedback
    in~(\ref{eqn_inv_feedback}) rendering $\Gamma$ invariant.}}
\end{remark}
\begin{proof}
The map $(\varphi,\dot \varphi,s,\dot s) \mapsto (e,\dot e,s,\dot s)$
is a diffeomorphism, and the image of $\Gamma$ under this map is the
set $\tilde \Gamma :=\{(e,\dot e,s,\dot s): e=\dot e=0\}$. The
feedback~\eqref{eq:stabilizing_feedback} is smooth in a neighbourhood
of $\Gamma$, and it yields $\ddot e + K_1 e + K_2 =0$, with
$K_1,K_2>0$. Therefore, the set $\tilde \Gamma$ is exponentially
stable, implying that $\Gamma$ is exponentially stable as well, and
proving part (i).  As for part (ii), by
Propositions~\ref{prop:internal_dynamics_general_properties}
and~\ref{prop:internal_dynamics_bicycle}, there exists a smooth
function $\nu:[\Re]_T \to (0,\infty)$ such that the closed orbit $\cR
= \{(s,\dot s): \dot s = \nu(s)\}$ is exponentially stable for
system~\eqref{eq:motion_on_Gamma}. Therefore, $\bar \cR$ is
asymptotically stable relative to $\Gamma$ (i.e., asymptotically
stable when the initial conditions are restricted to lie on
$\Gamma$). In order to prove that $\bar \cR$ is asymptotically stable
for initial conditions in a neighbourhood of $\Gamma$, note that $\bar
\cR$ is a closed curve, and hence a compact set.  Owing to the
reduction principle for asymptotic stability of compact sets
(see~\cite{SeiFlo95} and \cite[Theorem 10]{ElHMag13}), the asymptotic
stability of $\bar \cR$ relative to $\Gamma$, together with the
asymptotic stability of $\Gamma$, imply that $\bar \cR$ is
asymptotically stable for~\eqref{eq:sys}. By
Propositions~\ref{prop:internal_dynamics_general_properties}
and~\ref{prop:internal_dynamics_bicycle}, its domain of attraction
contains the set $\{(q,\dot q) \in \Gamma : \dot s>0\}$. Since the
domain of attraction of a closed set is an open set, the domain of
attraction of $\Gamma$ is a neighbourhood of $\{(q,\dot q) \in \Gamma
: \dot s>0\}$. Finally, concerning part (iii), for all $(q,\dot q)
\in\bar \cR$ we have $\dot s = \nu(s)>0$. Hence, for all initial
conditions in the domain of attraction of $\bar \cR$ there exist $\bar
t>0$ and $\epsilon >0$ such that $\dot s(t)>\epsilon>0$ for all $ t
\geq \bar t$, and hence the bicycle traverses the entire curve
$\C$. Since $\bar \cR$ is diffeomorphic to $S^1$, since it is
asymptotically stable, and solutions originating on it are periodic,
$\bar \cR$ is a stable limit cycle of the closed-loop
system. Therefore, solutions in the domain of attraction of $\bar \cR$
converge asymptotically to a periodic orbit.  
\end{proof}

\section{Numerical implementation}\label{sec:numerics}

To implement the solution presented in Theorem~\ref{thm:solution} one
needs an analytical expression of the function $\Phi: [\Re]_T \to
\Re$. This function is found through numerical integration of the
virtual constraint generator~\eqref{eq:constraint_generator} using a
numerical procedure. We will now outline this procedure and make
informal deductions about the impact of the approximation. We begin
our design by picking a function $\tilde \mu:\Re \to (0,\infty)$
according to Proposition~\ref{prop:generator:periodic_solutions}, part
(ii), and an initial condition $x_0$.  Using a one-dimensional search,
we find the unique value of $\epsilon$ such that the solution
of~\eqref{eq:constraint_generator} is $T$-periodic with a desired
accuracy. The function $\Phi:[\Re]_T \to \Re$ is determined as a
spline interpolation from the samples of the solution
of~\eqref{eq:constraint_generator} over a period. With a spline
interpolation at hand, one can compute $\Phi', \Phi''$ analytically
and implement~\eqref{eq:stabilizing_feedback}. The numerical
approximation process used to find $\Phi$ introduces a bounded error
in the feedback controller~\eqref{eq:stabilizing_feedback} which can
be made arbitrarily small. The effect of this error is to perturb the
constraint manifold $\Gamma$ without affecting its exponential
stability. The dynamics on the perturbed manifold are still governed
by~\eqref{eq:motion_on_Gamma}, where the functions $\Psi_1,\Psi_2$ are
affected by small perturbations. Such perturbations have no effect on
the hypotheses of
Proposition~\ref{prop:internal_dynamics_general_properties}, because
they involve strict inequalities, $\Psi_1 >0$, $\int_0^T \tilde \Psi_2
d \tau <0$. Thus, the conclusion of Theorem~\ref{thm:solution} still
holds. The approximation of $\Phi$ perturbs the constraint manifold
$\Gamma$ and the asymptotically stable limit cycle $\bar \cR$.

\begin{example}
{\rm We return to example of the ellipse, with the VHC depicted in
  Figure~\ref{fig:ellipse:virtual_constraint}. The simulation results
  for the closed-loop system with
  controller~\eqref{eq:stabilizing_feedback} and $K_1=5$, $K_2=2$ are
  shown in
  Figures~\ref{fig:ellipse:closed_loop2},~\ref{fig:ellipse:closed_loop1}
  for the initial condition $(\varphi(0),\dot \varphi(0),s(0),\dot
  s(0)) = (0.1,0,0,0.2)$. Figure~\ref{fig:ellipse:closed_loop2}
  illustrates the exponential convergence of $\varphi(t)- \Phi(s(t))$
  to zero. Figure~\ref{fig:ellipse:closed_loop1} displays the
  projection of the phase curve on the $(s,\dot s)$ plane and its
  convergence to the submanifold $\cR$.

\begin{figure}
\psfrag{t}{$t$}
\psfrag{P}{$\Phi(s(t))$}
\psfrag{Q}{$\varphi(t)$}
\centerline{\includegraphics[width=.5\textwidth]{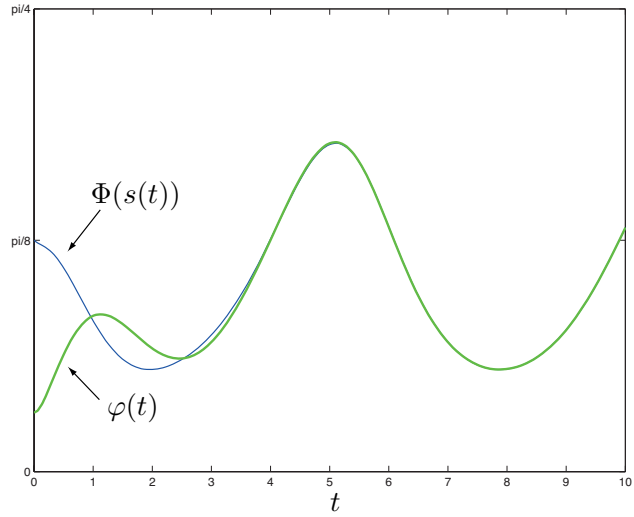}}
\caption{Simulation of the closed-loop system for the ellipse
  example. The solution converges to the constraint manifold
  $\Gamma$.}
\label{fig:ellipse:closed_loop2}
\end{figure}
\begin{figure}
\psfrag{s}{$s$} \psfrag{t}{$\dot s$}
\psfrag{R}{$\cR$}
\centerline{\includegraphics[width=.5\textwidth]{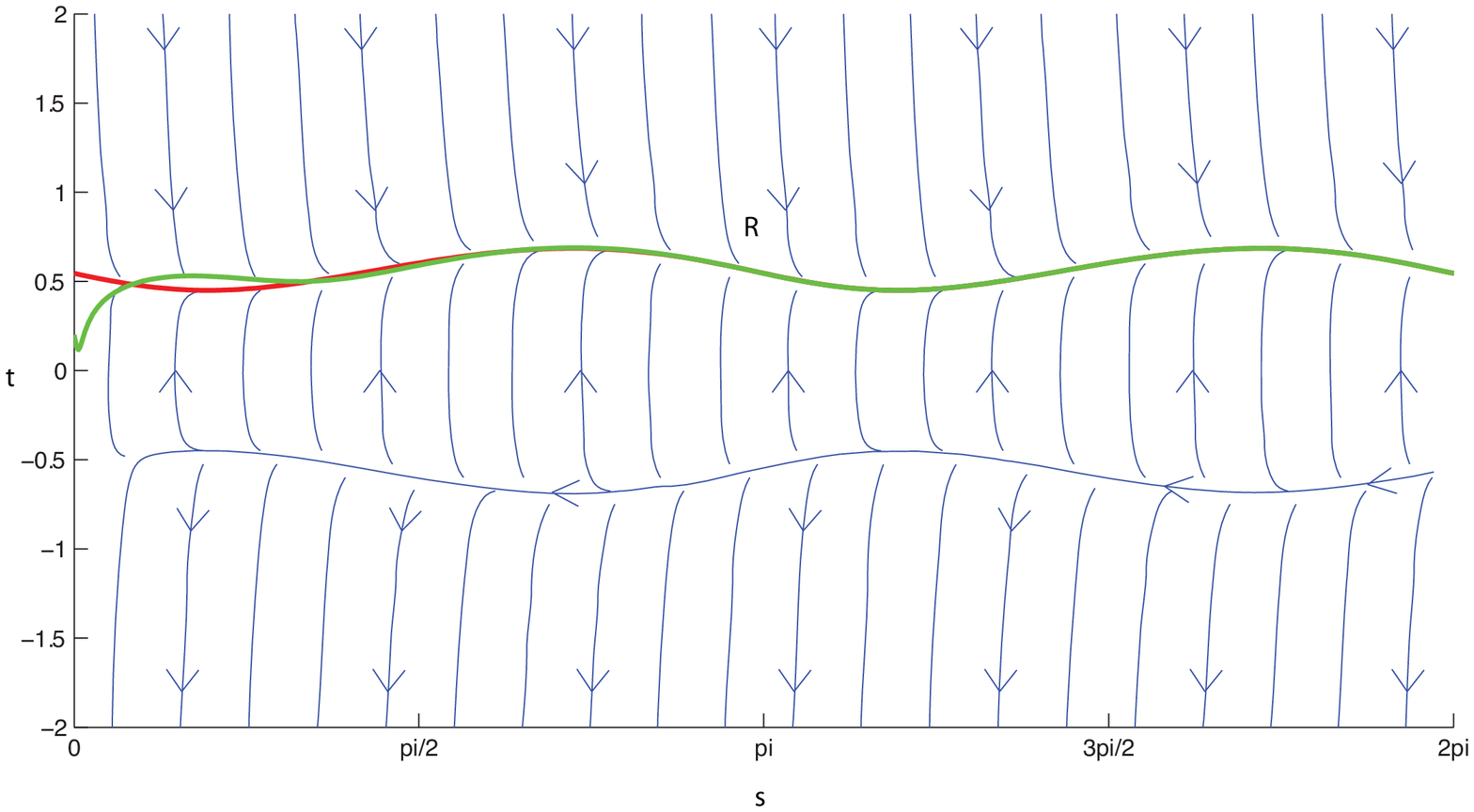}}
\caption{Simulation of the closed-loop system for the ellipse
  example. The solution converges to the submanifold $\cR \subset \Gamma$.}
\label{fig:ellipse:closed_loop1}
\end{figure}
}
\end{example}

\begin{example}
\label{example_non_convex}
{\rm In this second example, we consider the $2 \pi$-periodic curve
  shown in Figure~\ref{fig:non_convex}, parametrized as $(5+1.5 \cos
  (2 t)) (\cos t, \sin t)$, and then reparameterized with respect to the
  arc-length $s$. The curve has length $L=39.129$.  Since
  $\sigma:[\Re]_L \to \Re^2$ is not convex,
  Assumption~\ref{assump_convex} is not satisfied and the results
  presented in the paper cannot be applied directly. Nevertheless, the
  maneuvering problem can still be solved applying the VHC method. For
  the virtual constraint generator~(\ref{eq:constraint_generator}), we
  pick $\tilde \delta(t)\equiv \epsilon$, and choose $x(0)=0.35$.
  Numerically, we find that setting $\epsilon=0.1194$ the solution
  of~\eqref{eq:constraint_generator} is $L$-periodic, thus giving rise
  to a valid VHC.
\begin{figure}[H]
\centerline{\includegraphics[width=.5\textwidth]{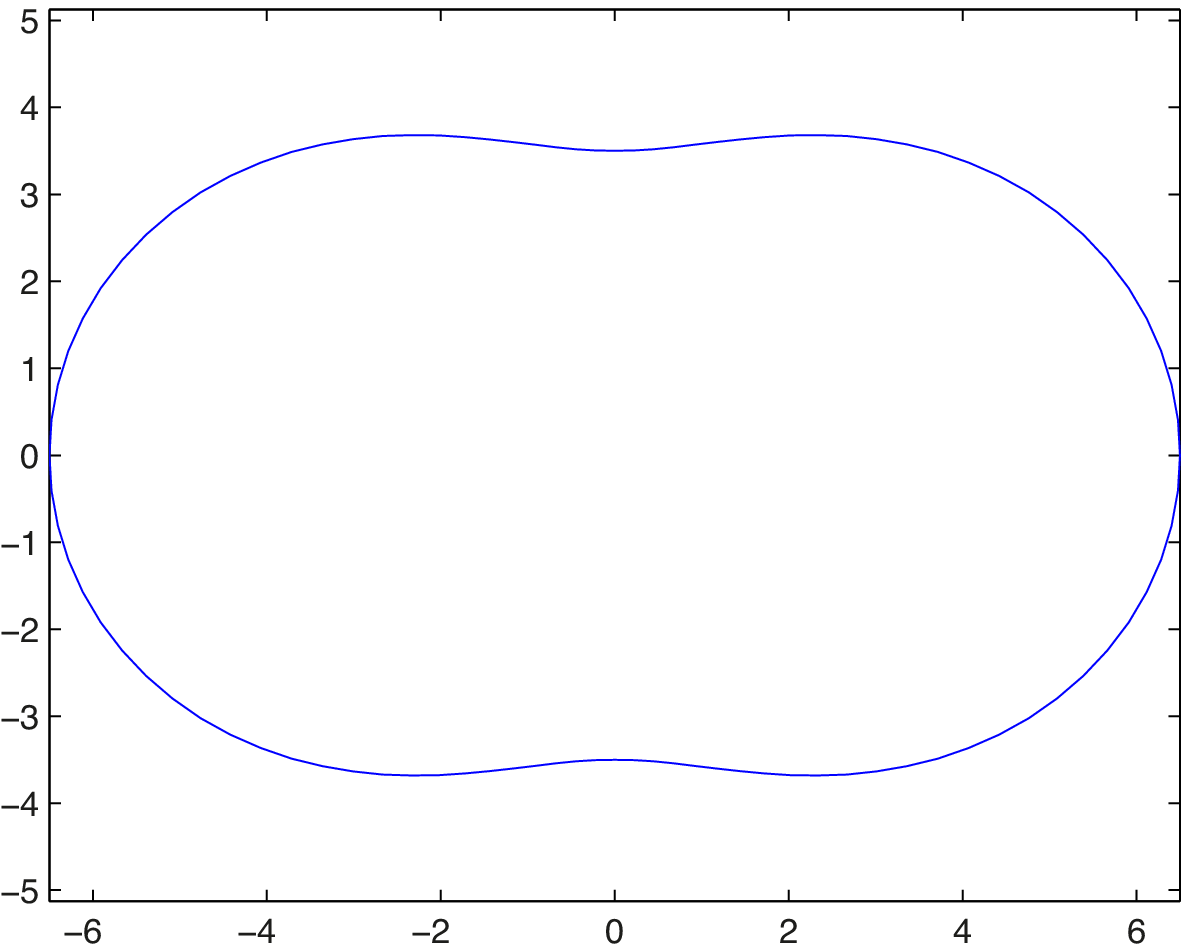}}
\caption{The non-convex curve considered in example~\ref{example_non_convex}.}
\label{fig:non_convex}
\end{figure}
Again, the simulation results for the closed-loop system with
controller~\eqref{eq:stabilizing_feedback} and $K_1=5$, $K_2=2$ are
shown in
Figures~\ref{fig:non_convex:closed_loop2},~\ref{fig:non_convex:closed_loop1}
for the initial condition $(\varphi(0),\dot \varphi(0),s(0),\dot s(0))
= (0,0,0,2)$.

%
\begin{figure}[H]
\psfrag{t}{$t$}
\psfrag{P}{$\Phi(s(t))$}
\psfrag{Q}{$\varphi(t)$}
\centerline{\includegraphics[width=.5\textwidth]{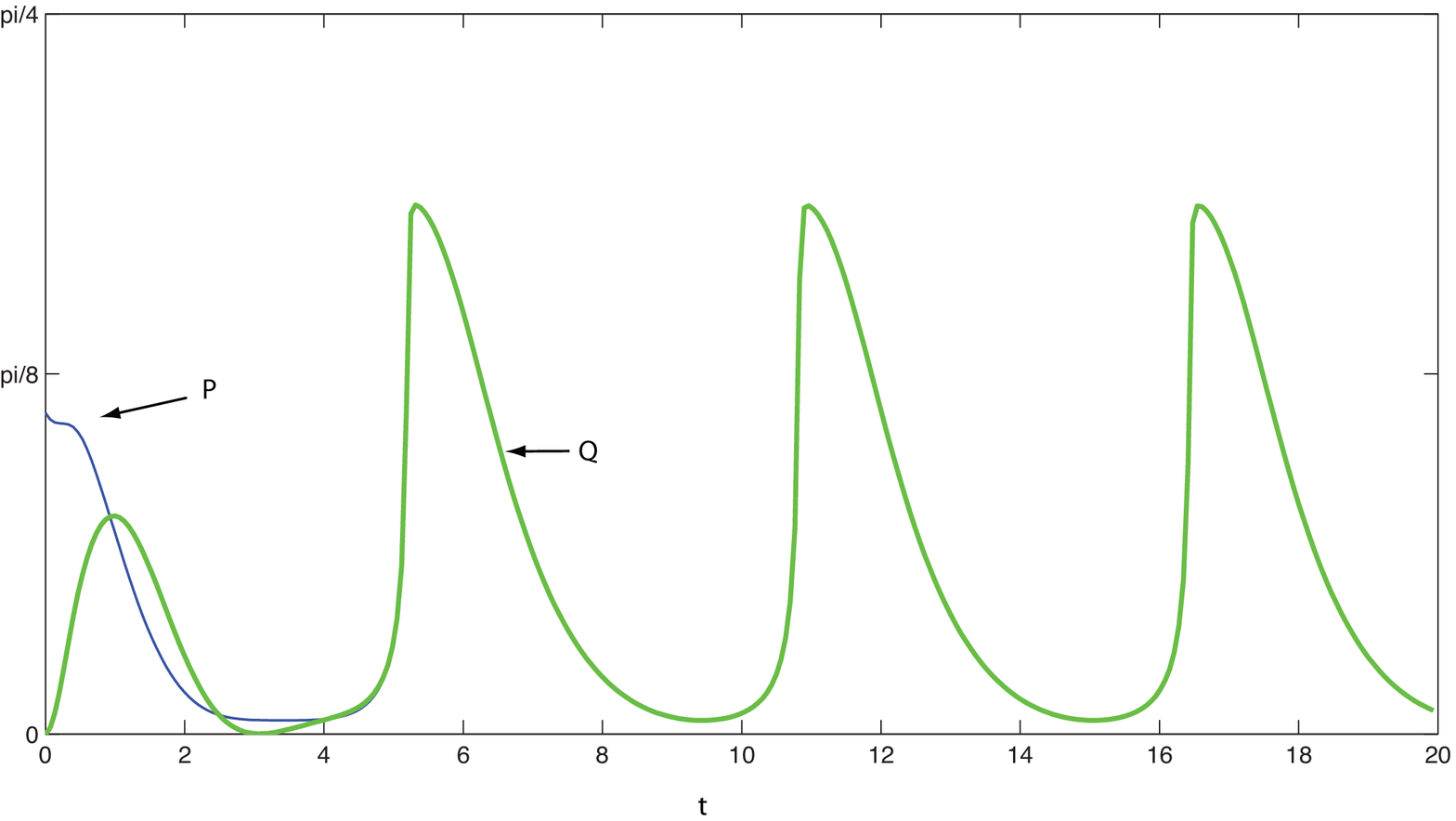}}
\caption{Simulation of the closed-loop system for the curve considered in
  example~\ref{example_non_convex}. The solution converges to the constraint manifold
  $\Gamma$.}
\label{fig:non_convex:closed_loop2}
\end{figure}
\begin{figure}[H]
\psfrag{s}{$s$} \psfrag{t}{$\dot s$}
\psfrag{R}{$\cR$}
\centerline{\includegraphics[width=.5\textwidth]{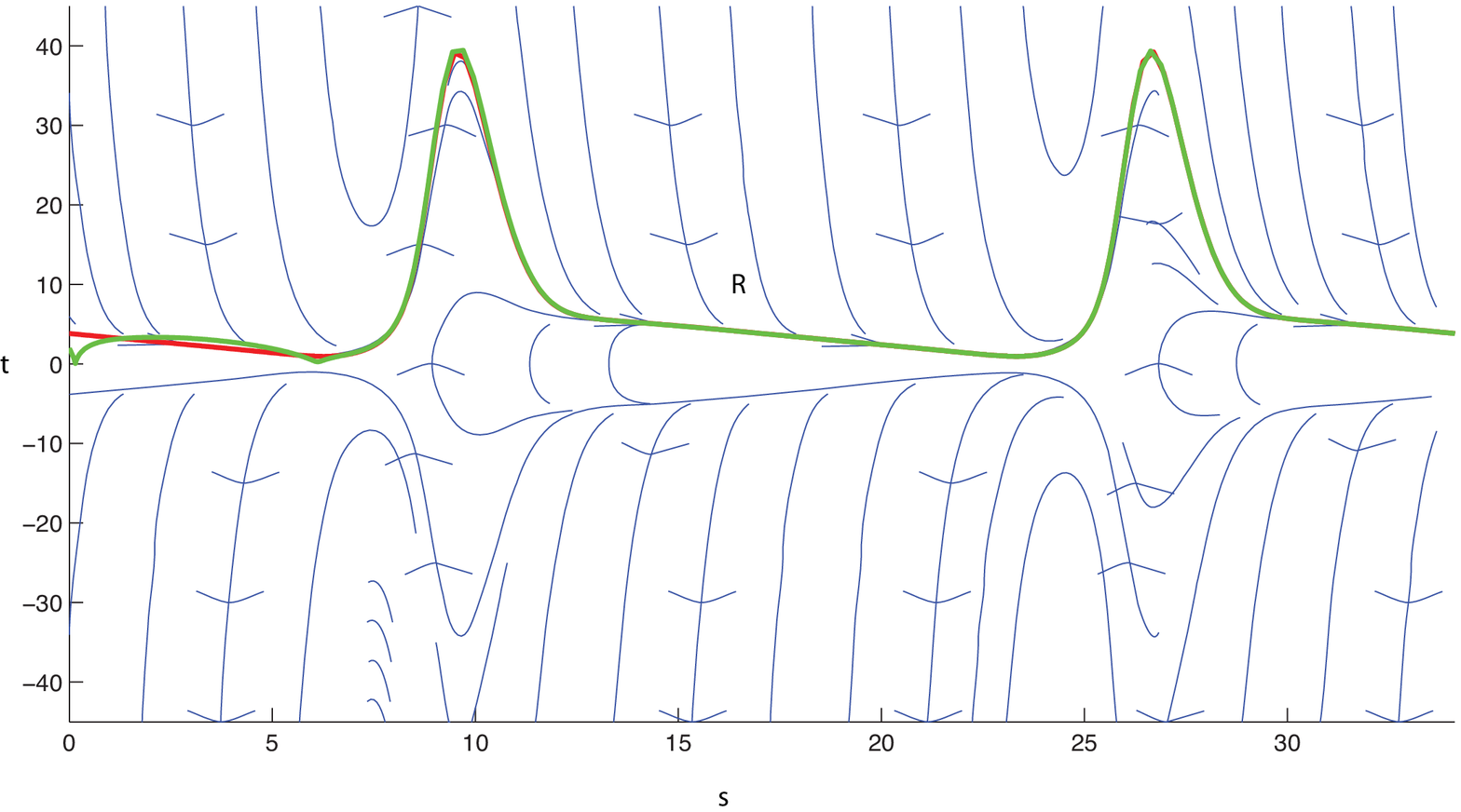}}
\caption{Simulation of the closed-loop system for the curve considered in
  example~\ref{example_non_convex}. The solution converges to the submanifold $\cR \subset \Gamma$.}
\label{fig:non_convex:closed_loop1}
\end{figure}
}\end{example}

\section*{Acknowledgements} 
We are grateful to Alireza Mohammadi for his helpful comments
on this paper. 

\bibliographystyle{elsarticle-num}\bibliography{citazioni}

\end{document}